\theoremstyle{plain}
\theoremstyle{definition}
\newtheorem{theorem}{Theorem}[section]
\newtheorem{thm}[theorem]{Theorem}
\newtheorem{lem}[theorem]{Lemma}
\newtheorem{corollary}[theorem]{Corollary}
\begin{document}

	\title{On Euler-Sombor Energy of Graphs}\maketitle 

	\markboth{ Sopan Bansode Sharad Barde Ganesh Mundhe}{On Euler-Sombor Energy of Graphs}\begin{center}\begin{large} Sopan Bansode$^1$, Sharad Barde$^2$, Ganesh Mundhe$^3$,\end{large}\\\begin{small}\vskip.1in\emph{
1. Anantrao Thopte College Bhor,Pune-412206 ,MH,India\\
2. STE's Sinhgad College of Science, Pune-411046, MH,India\\
3. Army Institute of Technology, Pune-411015, MH,India}\\ 
		e-mail: \texttt{1.sopanb211@gmail.com,2.sharadbardew@gmail.com, 3.gmundhe@aitpune.edu.in,}\end{small}\end{center}\vskip.2in

	\begin{abstract} In 2024, Gutman et al. \cite{I.Gutman 3} defined a new molecular descriptor called as The Euler-Sombor  $(ES)$ index of graph. By using this index we define the Euler-Sombor $(ES)$ matrix of a graph $G$ whoes $(i,j)^{th}$ entry is $\sqrt{{d_i}^2+{d_j}^2+d_i.d_j}$ if vertex $v_i$ is adjacent to vertex $v_j$, otherwise $0$. The $ES$ eigenvalues of the graph $G$ are the eigenvalues of its $ES$ matrix ,$ES(G)$. In this paper we discus $ES$ eigenvalues $\nu_i$ and energy of some classes of graphs.\end{abstract}\vskip.2in
	\noindent\begin{Small}\textbf{Mathematics Subject Classification}:
		05C50    \\\textbf{Keywords}: Adjacency Matrix $A(G)$, ES matrix, ES eigenvalues,Forgotten index,Second Zagreb Index, topological index \end{Small}\vskip.2in
	
	\vskip 0.15cm
\baselineskip 19truept 
\section{Introduction}
 \indent The graphs considered here all are  undirected,finite and simple. Let $G(V,E)$ be a graph with vertex set $V(G) =\{v_1, v_2, \dots, v_n\}$ and edge set $ E(G) =\{e_{1},...,e_{m}\}$. We denote $d_i$ the degree of vertex $v_i$ in $G$. Let $K_n$, $C_n$ and $S_n$ denote a complete, cycle and star graph resp. on $n$ vertices also the complete bipartite graph on $m+n$ vertices is denoted by $K_{m,n}$. 
 In chemical graph theory and mathematical chemistry, a molecular graph is a visual representation of a chemical compound where the vertices/nodes represent the atoms, and the edges signify the chemical bonds between them. This graph-based approach provides a structural formula of the compound.
 
 A topological index known as a connectivity index, is a numerical value derived from the molecular graph. It captures various aspects of the graph's topology, which helps in characterizing the molecule's structure. The toplogical indices are invariant under graph isomorphisms, meaning they do not change if the graph is transformed in certain ways.
 
 Topological indices \cite{I.Gutman 2} describes the molecular structure quantitatively, offering insights into the compounds property such as reactivity and stability. Examples of such indices include the Wiener index, Zagreb indices, and Randic index. Each of these indices highlights different structural features which are useful to study the physical and chemical behaviours of the molecule.\\
 \indent The vertex based Sombor index \cite{I.Gutman 2} introduced by I.Gutman is 
 \begin{align*}
  SO(G) = \sum_{e=(v_{i},v_{j})\in E(G)}\sqrt{d_{i}^2+d_{j}^2}
  \end{align*}
 Now recently Gutman($2024$) obtained the Euler-Sombor index \cite{I.Gutman 1} closely related to the Sombor index.
 \begin{align*}
  ES=ES(G) = \sum_{e=(v_{i},v_{j})\in E(G)}\sqrt{d_{i}^2+d_{j}^2+d_{i}.d_{j}}
  \end{align*}
 In $2015$ B. Furtula et al. \cite{B.Furtula et.al.} gives Forgotten index  of a graph $G$ as,
 \begin{align*}
  FI=FI(G) = \sum_{e=(v_{i},v_{j})\in E(G)}({d_{i}^2+d_{j}^2})
  \end{align*}
The Zagreb indices \cite{I.Gutman et.al.,K.Das} of a graph $G$  are defined as,
   \begin{align*}
    Second ~Zagreb ~Index=SZ=SZ(G) = \sum_{e=(v_{i},v_{j})\in E(G)}{d_{i}.d_{j}}
    \end{align*}
Let the $ES$ matrix of the graph G, which a square matrix of order $n$ whose entries are given by 
 \begin{equation*}
 ES(G)_{ij} = \begin{cases}\sqrt{d_{i}^2+d_{j}^2+d_{i}.d_{j}}\hspace{0.5cm}   \text{if}~~(v_{i}, v_{j})\in E(G),\\
 0\hspace{1.9cm} \text{otherwise}.
 \end{cases}
 \end{equation*}
Suppose $\nu_{1}(G)$, $\nu_{2}(G)$, $\dots$, $\nu_{n}(G)$ are the eigenvalues of $ES$ matrix of $G$. Then $ES$ energy of graph \cite{Gutman 1, R.Bal} is given by
\begin{align*}
 E_{ES}(G) = \sum^{n}_{i=1}|\nu_{i}(G)|.
\end{align*}
\indent  
 In this paper, we will explore properties of the $ES$ matrix, $ES$ eigenvalues and $ES$ energy of a graph. The second Section contains preliminaries. In Section 3, we establish some properties of $ES$ eigenvalues of a graph. While in section 4 we obtained some results on $ES$ energy of graph G.

\section {Preliminaries}
In this section, we begin with some well known results from \cite{Bap}. The following Lemma \ref{1}, Lemma \ref{2}, Lemma \ref{3}, and Lemma \ref{21} gives the eigenvalues of $A(G)$ related with eigenvalues of adjacency matrix of graphs $K_{n}$, $K_{m,n}$, $C_{n}$, and $S_n$. The $\lambda_{1}, \lambda_{2}, \dots, \lambda_{n}$ are adjacency eigenvalues of the graph $G$.
It is known that, if all entries of $A$ are strictly positive, then we say $A$ is positive matrix we write $(A)_{ij} > 0$.
If $A$ is a real and symmetric matrix, then all eigenvalues of $A$ are real.
We say that as eigenvalue is simple if its algebraic multiplicity is $1$.\\
\indent The $ES$ matrix of graph $G$ is an irreducible non-negative symmetric real matrix having all $ES$ eigenvalues are real with  trace $0$. For the following results see Bapat \cite{Bap} and Brouwer \cite{Brouwer AE}.
\begin{lem}\label{1}\cite{Bap}
For any positive integer $n$, the eigenvalues of complete graph $K_{n}$ are $\lambda_{1} = n-1$ and $\lambda_{2} = \dots = \lambda_{n} = -1$.
\end{lem}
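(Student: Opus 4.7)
The plan is to exploit the identity $A(K_n) = J_n - I_n$, where $J_n$ denotes the $n \times n$ all-ones matrix. Because $I_n$ commutes with every matrix and simply shifts the spectrum by $-1$, the whole problem reduces to finding the spectrum of $J_n$.

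To diagonalise $J_n$, I would first observe that it has rank one (all rows are identical), so its kernel has dimension $n-1$ and $0$ appears as an eigenvalue with multiplicity $n-1$; an explicit basis of the kernel is given by $\{e_1 - e_i : 2 \le i \le n\}$. Since the sum of all eigenvalues equals $\mathrm{tr}(J_n) = n$, the remaining eigenvalue must be $n$, realised by the all-ones vector $\mathbf{1}$ via $J_n \mathbf{1} = n\mathbf{1}$. Because $J_n$ is real symmetric, algebraic and geometric multiplicities coincide, so no multiplicity subtleties arise.

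Subtracting $I_n$ preserves each eigenvector and shifts each eigenvalue by $-1$, which yields $\lambda_1 = n-1$ (simple, eigenvector $\mathbf{1}$) and $\lambda_2 = \cdots = \lambda_n = -1$. As sanity checks I would verify $\sum \lambda_i = 0 = \mathrm{tr}\, A(K_n)$ and $\sum \lambda_i^2 = (n-1)^2 + (n-1) = n(n-1) = 2\,|E(K_n)|$, both of which are automatic from the standard spectral identities for adjacency matrices.

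There is no genuine obstacle here; the entire content lies in the rank-one observation for $J_n$, after which the eigenvalues of $A(K_n)$ fall out by inspection. This is precisely why the result is cited from Bapat rather than reproved, and the argument above mirrors the standard textbook treatment.
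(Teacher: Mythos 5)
Your argument is correct and complete: the decomposition $A(K_n) = J_n - I_n$, the rank-one observation for $J_n$, and the resulting spectral shift give exactly the claimed eigenvalues. The paper offers no proof of its own here --- the lemma is simply cited from Bapat --- and your proof is the standard textbook treatment found in that reference, so there is nothing to reconcile.
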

\begin{lem}\label{2}\cite{Bap}
For any positive integer $m,n$, the eigenvalues of complete 
bipartite graph $K_{m,n}$ are $\lambda_{1} = \sqrt{mn}$, $\lambda_{2} =  \lambda_{3} = \dots = \lambda_{m+n-2} = 0$ and $\lambda_{m+n} = -\sqrt{mn}$.
\end{lem}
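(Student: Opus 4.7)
The plan is to work directly with the adjacency matrix of $K_{m,n}$ in block form
\[
A(K_{m,n}) = \begin{pmatrix} 0_{m\times m} & J_{m\times n} \\ J_{n\times m} & 0_{n\times n} \end{pmatrix},
\]
where $J_{p\times q}$ denotes the all-ones $p\times q$ matrix. First I would compute $A^{2}$ block-wise. Because $J_{m\times n}J_{n\times m} = n\,J_{m\times m}$ and $J_{n\times m}J_{m\times n} = m\,J_{n\times n}$, the square is block-diagonal with diagonal blocks $n\,J_{m\times m}$ and $m\,J_{n\times n}$.

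Next I would read off the spectra of these two diagonal blocks. Every all-ones matrix $J_{k\times k}$ has rank one with spectrum $\{k, 0, \dots, 0\}$, so $nJ_{m\times m}$ contributes the eigenvalue $mn$ once and $0$ with multiplicity $m-1$, while $mJ_{n\times n}$ contributes $mn$ once and $0$ with multiplicity $n-1$. Hence $A^{2}$ has eigenvalue $mn$ with multiplicity $2$ and eigenvalue $0$ with multiplicity $m+n-2$. Since $A$ is real symmetric, each eigenvalue $\lambda$ of $A$ satisfies $\lambda^{2}\in\operatorname{Spec}(A^{2})$, so $\operatorname{Spec}(A)\subseteq\{+\sqrt{mn},\,0,\,-\sqrt{mn}\}$ and the multiplicity of $0$ is exactly $m+n-2$. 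The split between $+\sqrt{mn}$ and $-\sqrt{mn}$ is then pinned down by the trace identity $\operatorname{tr}(A)=0$, forcing each of $\pm\sqrt{mn}$ to appear with multiplicity one.

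If a more constructive proof is preferred, I would instead exhibit eigenvectors directly: the vector with $\sqrt{n}$ in the first $m$ coordinates and $\sqrt{m}$ in the last $n$ coordinates is an eigenvector for $+\sqrt{mn}$, negating the last $n$ entries gives one for $-\sqrt{mn}$, and the kernel consists of all vectors whose first-block coordinates sum to $0$ and whose second-block coordinates sum to $0$, a space of dimension $(m-1)+(n-1)=m+n-2$. There is no serious obstacle here; the only bookkeeping point is checking that the two multiplicities $m-1$ and $n-1$ from the null constraints combine correctly with the two nonzero eigenvectors to account for all $m+n$ dimensions.
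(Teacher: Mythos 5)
Your proof is correct. Note, though, that the paper offers no proof of this lemma at all --- it is quoted from Bapat's \emph{Graphs and matrices} as a known fact --- so there is nothing internal to compare against; your argument simply supplies what the citation leaves out. Both of your routes are sound: the computation $A^{2}=\operatorname{diag}(nJ_{m\times m},\,mJ_{n\times n})$ correctly yields $\operatorname{Spec}(A^{2})=\{mn,mn,0^{(m+n-2)}\}$, and the trace condition then forces one copy each of $\pm\sqrt{mn}$; the explicit eigenvectors in your second variant check out as well. One small point worth flagging: your proof establishes that $0$ has multiplicity $m+n-2$, i.e.\ $\lambda_{2}=\dots=\lambda_{m+n-1}=0$, whereas the statement as printed writes $\lambda_{2}=\dots=\lambda_{m+n-2}=0$ and then jumps to $\lambda_{m+n}$, leaving $\lambda_{m+n-1}$ unaccounted for. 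That is an indexing typo in the lemma (the paper itself uses the corrected indexing when it reuses this fact in Theorem \ref{8}), and your count is the right one.
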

\begin{lem}\label{3}\cite{Bap}
For $n \geq 2$, the eigenvalues of $G = C_{n}$ are $\lambda_{i} = 2 cos\frac{2\pi i}{n}$, where $i = 1, \dots, n$.
\end{lem}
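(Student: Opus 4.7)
The plan is to exploit the circulant structure of the adjacency matrix $A(C_n)$. Labelling the vertices $v_0, v_1, \ldots, v_{n-1}$ cyclically, the $(r,s)$ entry of $A(C_n)$ equals $1$ exactly when $s \equiv r \pm 1 \pmod{n}$. Hence each row is a cyclic shift of the first row $(0,1,0,\ldots,0,1)$, and $A(C_n)$ is a circulant matrix. This immediately suggests that the simultaneous eigenvectors of all circulants, namely the columns of the discrete Fourier matrix, will diagonalize $A(C_n)$.

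First I would introduce, for each $k \in \{0,1,\ldots,n-1\}$, the vector $u_k = (1,\omega_k,\omega_k^2,\ldots,\omega_k^{n-1})^{T}$ where $\omega_k = e^{2\pi \mathbf{i} k/n}$ is an $n$-th root of unity. Then I would compute the $r$-th entry of $A(C_n) u_k$ directly from the definition of the adjacency matrix: it equals $\omega_k^{r-1} + \omega_k^{r+1} = \omega_k^{r}(\omega_k^{-1} + \omega_k) = 2\cos\!\left(\tfrac{2\pi k}{n}\right)\omega_k^{r}$, where the exponents are read modulo $n$. This shows that $u_k$ is an eigenvector of $A(C_n)$ with eigenvalue $2\cos(2\pi k/n)$.

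Next I would argue that this accounts for all $n$ eigenvalues. The vectors $u_0,u_1,\ldots,u_{n-1}$ are the columns of the (non-singular) $n\times n$ Fourier matrix and therefore form a basis of $\mathbb{C}^{n}$. Consequently the $n$ numbers $2\cos(2\pi k/n)$ for $k=0,1,\ldots,n-1$ are precisely the spectrum of $A(C_n)$. Re-indexing via $i = k$ (or $i=k+1$, matching the statement's range $i=1,\ldots,n$) gives the claimed list $\lambda_i = 2\cos(2\pi i/n)$.

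The only subtlety, and the step I would be most careful about, is the bookkeeping of multiplicities: for $0 < k < n/2$ the pair $\omega_k,\omega_{n-k}$ produces the same real eigenvalue $2\cos(2\pi k/n)$ with two independent complex eigenvectors, which one can replace by the real vectors $\mathrm{Re}\,u_k$ and $\mathrm{Im}\,u_k$ if a real diagonalization is wanted. This does not affect the list of eigenvalues (counted with multiplicity), but it is where a careless argument could misstate the spectrum for even $n$, when $k=n/2$ contributes the simple eigenvalue $-2$.
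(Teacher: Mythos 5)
The paper does not prove this lemma at all --- it is quoted as a known preliminary from Bapat's \emph{Graphs and matrices}, so there is no in-paper argument to compare against. Your circulant/discrete-Fourier argument is the standard proof of this fact and is correct: $A(C_n)$ is circulant, each vector $u_k=(1,\omega_k,\dots,\omega_k^{n-1})^{T}$ is an eigenvector with eigenvalue $\omega_k+\omega_k^{-1}=2\cos(2\pi k/n)$, and since the $n$ vectors $u_0,\dots,u_{n-1}$ are linearly independent this exhausts the spectrum with multiplicity; your remark about the paired eigenvalues for $0<k<n/2$ is the right caution. The only bookkeeping nit is the re-indexing at the end: the ranges $k=0,\dots,n-1$ and $i=1,\dots,n$ give the same multiset only because $2\cos(2\pi i/n)$ is periodic in $i$ with period $n$ (so $i=n$ reproduces the $k=0$ value $2$); the shift $i=k+1$ by itself would not, so you should invoke that periodicity explicitly rather than offer it as an alternative.
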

\begin{lem}\label{4}\cite{Brouwer AE}
For any positive integer $n$, the eigenvalues of star graph $S_{n}$ are $\lambda_{1} = \sqrt{n-1}$, $\lambda_{2} = \dots = \lambda_{n-1}=0$ and $\lambda_{n} = -\sqrt{n-1}$.
\end{lem}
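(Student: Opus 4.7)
The plan is to use the block structure of the adjacency matrix. Writing the center of $S_n$ as $v_1$ and the leaves as $v_2,\dots,v_n$, one has
\[
A(S_n)=\begin{pmatrix} 0 & \mathbf{1}^{T}\\ \mathbf{1} & 0 \end{pmatrix},
\]
where $\mathbf{1}$ denotes the $(n-1)$-dimensional all-ones column and the lower-right block is the $(n-1)\times(n-1)$ zero matrix. Since $A(S_n)$ is the sum of two rank-one pieces, its rank is at most $2$, so $\lambda=0$ appears with algebraic multiplicity at least $n-2$ (equality holding because $A(S_n)$ is real symmetric, so geometric and algebraic multiplicities coincide). More concretely, the kernel equation $A(S_n)x=0$ forces $x_1=0$ and $x_2+\cdots+x_n=0$, cutting out exactly an $(n-2)$-dimensional subspace.

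Next I would locate the two nonzero eigenvalues by exploiting the leaf-transitive symmetry of $S_n$. Searching for eigenvectors of the form $x=(a,b,b,\dots,b)^{T}$ reduces the equation $A(S_n)x=\lambda x$ to the scalar system $(n-1)b=\lambda a$ and $a=\lambda b$. Eliminating $a$ yields $\lambda^{2}=n-1$, hence $\lambda=\pm\sqrt{n-1}$, with corresponding eigenvectors $(\pm\sqrt{n-1},\,1,\,1,\,\dots,\,1)^{T}$.

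With $n-2$ zero eigenvalues and the two nonzero eigenvalues $\pm\sqrt{n-1}$ accounted for, all $n$ eigenvalues are determined. A quick sanity check via the trace identities $\operatorname{tr} A(S_n)=0$ and $\operatorname{tr} A(S_n)^{2}=2|E(S_n)|=2(n-1)$ matches the claimed spectrum. There is no real obstacle in this argument; the only point that could trip one up is ensuring completeness of the symmetric ansatz $(a,b,b,\dots,b)^T$, but this is automatic once the multiplicity of the zero eigenvalue is pinned down by the rank computation, since the orthogonal complement of the kernel is precisely the two-dimensional subspace spanned by $(1,0,\dots,0)^T$ and $(0,1,1,\dots,1)^T$, in which the ansatz lives.
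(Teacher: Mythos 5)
Your argument is correct and complete. Note, however, that the paper does not prove this lemma at all: it is stated as a quoted result with a citation to Brouwer and Haemers, so there is no in-paper proof to compare against. Your proof supplies the standard argument --- the rank-two block structure pins down the zero eigenvalue with multiplicity $n-2$, and the leaf-symmetric ansatz $(a,b,\dots,b)^{T}$ reduces the remaining two eigenvalues to the scalar equation $\lambda^{2}=n-1$ --- and you correctly close the loop by observing that the orthogonal complement of the kernel is exactly the two-dimensional space in which the ansatz lives, so no eigenvalues are missed. The trace checks are a nice confirmation. The only pedantic caveat concerns the statement itself rather than your proof: for $n=1$ (and to a lesser extent $n=2$) the indexing $\lambda_2=\dots=\lambda_{n-1}$ degenerates, and your leaf/center decomposition implicitly assumes $n\geq 2$; this is harmless for the uses made of the lemma in the paper, where $n\geq 3$.
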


\section {ES-Eigenvalues of Some Classes of Graph} 
In this section, we obtain $ES$-eigenvalues of some classes of graphs like $r$-regular, $K_{m,n}$, and $S_n$ graphs, we also obtain some properties of the $ES$ matrix of a graph. 
In the following result, we give the $ES$ eigenvalues of $r$-regular graph.
\begin{thm}\label{5}
Let $G$ be a simple connected graph of order $n\geq 3$ with eigen values, $\lambda_{1}\geq \lambda_{2}\geq \dots \geq \lambda_{n}$  and $\nu_{1}\geq \nu_{2}\geq \dots \geq \nu_{n}$ be its $ES$ eigenvalues.
If $G$ is $r$-regular graph, then the $ES$ eigenvalues of $G$ are $\nu_{i} = r\sqrt{3}$ $\lambda_{i}$ for $i = 1, 2, \dots, n$.
\end{thm}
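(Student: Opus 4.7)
The plan is to observe that for an $r$-regular graph, the $ES$ matrix is simply a scalar multiple of the adjacency matrix, so the result reduces to the standard fact that scaling a matrix by a constant scales its spectrum by the same constant.

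First I would unpack the definition of the $ES$ matrix on $G$. Since $G$ is $r$-regular, every vertex $v_i$ satisfies $d_i = r$. Hence whenever $(v_i, v_j) \in E(G)$, the $(i,j)$ entry of $ES(G)$ is
\begin{align*}
ES(G)_{ij} = \sqrt{d_i^2 + d_j^2 + d_i d_j} = \sqrt{r^2 + r^2 + r^2} = r\sqrt{3},
\end{align*}
while $ES(G)_{ij} = 0 = r\sqrt{3}\cdot A(G)_{ij}$ whenever $(v_i, v_j) \notin E(G)$. The diagonal entries are zero in both matrices since $G$ is a simple graph. Therefore
\begin{align*}
ES(G) = r\sqrt{3}\, A(G).
\end{align*}

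Next I would invoke the elementary linear-algebra fact that if $M = cN$ for a scalar $c \in \mathbb{R}$, then the eigenvalues of $M$ are precisely $c$ times the eigenvalues of $N$, with identical eigenvectors. Applying this with $c = r\sqrt{3}$, $M = ES(G)$, and $N = A(G)$ yields $\nu_i = r\sqrt{3}\,\lambda_i$ for each $i = 1, 2, \dots, n$. Because $r\sqrt{3} \geq 0$, the ordering $\lambda_1 \geq \cdots \geq \lambda_n$ is preserved, so the indexing of $\nu_i$ matches that of $\lambda_i$ as stated.

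There is essentially no obstacle here; the proof is a direct computation combined with a trivial spectral fact. The only mild subtlety worth flagging is the preservation of ordering (which holds because $r\sqrt{3} \geq 0$), and the observation that both matrices agree on their zero diagonal, so the identification $ES(G) = r\sqrt{3}\,A(G)$ is genuinely an equality of matrices rather than only of off-diagonal patterns.
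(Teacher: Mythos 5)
Your proposal is correct and follows exactly the same route as the paper: identify $ES(G) = r\sqrt{3}\,A(G)$ for an $r$-regular graph and conclude by the scaling of eigenvalues. The paper states this identity without the entrywise computation, so your version simply fills in the details, including the observation that the ordering is preserved since $r\sqrt{3}\geq 0$.
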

\begin{proof}
Let $G$ be a connected $r$-regular graph of order $n$. 
\\We know that $ES(G)$ = $r\sqrt{3}$ $A(G)$.
\\Thus $\nu_{i} = r\sqrt{3}$ $\lambda_{i}$  \hspace*{0.2in}for $i = 1, 2, \dots, n$ 
\end{proof}

\begin{corollary}\label{6}
If $G = K_n$ complete graph, then the $ES$ eigenvalues of $G$ are $\nu_{1}$ = $(n-1)^2\sqrt{3}$ and $\nu_{2} = \dots = \nu_{n} = -(n-1)\sqrt{3}$. 
\end{corollary}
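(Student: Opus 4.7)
The plan is to apply Theorem \ref{5} directly, using $K_n$ as an instance of an $r$-regular graph and Lemma \ref{1} to supply the ordinary adjacency spectrum. Specifically, I would first observe that the complete graph $K_n$ is $(n-1)$-regular, so every diagonal entry of the degree information in the $ES$ matrix satisfies $d_i = n-1$. Consequently the scaling factor appearing in Theorem \ref{5} specializes to $r\sqrt{3} = (n-1)\sqrt{3}$.

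Next, I would invoke Theorem \ref{5} to conclude that $\nu_i = (n-1)\sqrt{3}\,\lambda_i$ for each $i = 1, 2, \dots, n$, where $\lambda_1 \geq \lambda_2 \geq \dots \geq \lambda_n$ are the adjacency eigenvalues of $K_n$. By Lemma \ref{1}, these adjacency eigenvalues are $\lambda_1 = n-1$ (simple) and $\lambda_2 = \dots = \lambda_n = -1$ (with multiplicity $n-1$). Substituting:
\begin{align*}
\nu_1 &= (n-1)\sqrt{3}\cdot(n-1) = (n-1)^2\sqrt{3},\\
\nu_i &= (n-1)\sqrt{3}\cdot(-1) = -(n-1)\sqrt{3}, \quad i = 2, \dots, n,
\end{align*}
which is exactly the claimed spectrum.

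There is essentially no obstacle here, since this corollary is a direct specialization of Theorem \ref{5}; the only thing to verify is that the regularity of $K_n$ forces $ES(K_n) = (n-1)\sqrt{3}\, A(K_n)$, which follows immediately because $\sqrt{d_i^2 + d_j^2 + d_i d_j} = \sqrt{3(n-1)^2} = (n-1)\sqrt{3}$ on every edge. Writing the proof would therefore take only two or three lines.
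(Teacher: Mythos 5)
Your proposal is correct and follows essentially the same route as the paper: both specialize the $r$-regular result (Theorem \ref{5}) to $r = n-1$, note that $ES(K_n) = (n-1)\sqrt{3}\,A(K_n)$, and read off the spectrum from Lemma \ref{1}. No meaningful difference in approach.
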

\begin{proof} 
From Lemma \ref{1}, if $G = K_n$, $\lambda_{1} = n-1$ and $\lambda_{2} = \dots = \lambda_{n} = -1$.
\\Since $ES(G)$ = $(n-1)\sqrt{3}$ $A(G)$.
\\Therefore $\nu_{i} = (n-1)\sqrt{3}$ $\lambda_{i}$  \hspace*{0.2in}for $i = 1, 2, \dots, n$
\end{proof} 
\begin{corollary}\label{7}
If $G = C_n$, then the $ES$ eigenvalues of $G$ are $\nu_{i}$ $=$ ($4\sqrt{3})cos(\frac{2\pi i}{n})$, where $i = 0, 
\dots, n-1$. 
\end{corollary}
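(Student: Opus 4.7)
The plan is to apply Theorem \ref{5} in the very special case where the common degree is $r = 2$, since the cycle $C_n$ is a $2$-regular graph on $n$ vertices. First I would observe that every vertex of $C_n$ has degree exactly $2$, so the $ES$-matrix satisfies $ES(C_n) = \sqrt{2^2 + 2^2 + 2\cdot 2}\,A(C_n) = \sqrt{12}\,A(C_n) = 2\sqrt{3}\,A(C_n)$. This is the $r = 2$ instance of the identity $ES(G) = r\sqrt{3}\,A(G)$ used in the proof of Theorem \ref{5}.

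Next I would invoke Lemma \ref{3}, which gives the adjacency eigenvalues of $C_n$ as $\lambda_i = 2\cos\!\left(\tfrac{2\pi i}{n}\right)$. Because the $ES$-matrix is a scalar multiple of $A(C_n)$, its eigenvalues are obtained by multiplying each $\lambda_i$ by the scalar $2\sqrt{3}$. Combining these two observations gives
\[
\nu_i \;=\; 2\sqrt{3}\cdot\lambda_i \;=\; 2\sqrt{3}\cdot 2\cos\!\left(\tfrac{2\pi i}{n}\right) \;=\; 4\sqrt{3}\cos\!\left(\tfrac{2\pi i}{n}\right),
\]
for $i = 0, 1, \dots, n-1$, which is exactly the claim.

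There is no real obstacle here: the statement is a direct corollary of Theorem \ref{5} applied with $r = 2$, together with the known spectrum of $C_n$ from Lemma \ref{3}. The only minor point worth noting in the write-up is the slight index shift between Lemma \ref{3} (which runs $i = 1, \dots, n$) and the corollary statement (which runs $i = 0, \dots, n-1$); since $\cos(2\pi i/n)$ is periodic in $i$ with period $n$, both indexings produce the same multiset of eigenvalues, so the identification is legitimate.
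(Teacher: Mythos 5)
Your proposal is correct and follows essentially the same route as the paper: specialize Theorem \ref{5} to the $2$-regular cycle so that $ES(C_n) = 2\sqrt{3}\,A(C_n)$, then scale the adjacency spectrum $\lambda_i = 2\cos(2\pi i/n)$ to get $\nu_i = 4\sqrt{3}\cos(2\pi i/n)$. Your remark about the index shift between $i=1,\dots,n$ and $i=0,\dots,n-1$ is a small point the paper glosses over, but otherwise the two arguments coincide.
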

\begin{proof} 
From Lemma \ref{2}, if $G=C_n$, $\lambda_{i} = 2 cos\frac{2\pi i}{n}$, for $i = 1, \dots, n$, then $\nu_{i} = 2\sqrt{3}\lambda_{i}$, for $i = 1, \dots, n$, would yield required result.
\end{proof}
The following result gives the $ES$ eigenvalues of complete bipartite graph.
\begin{thm}\label{8}
Let $G$ be a complete bipartite graph of order $m+n$,with $\lambda_{1}\geq \lambda_{2}\geq \dots \geq \lambda_{m+n}$ eigenvalues and $\nu_{1}\geq \nu_{2}\geq \dots \geq \nu_{m+n}$ be its $ES$ eigenvalues. Then $ES$ eigenvalues of $K_{m,n}$ are  $\sqrt{m^2+n^2+mn}\hspace*{0.05in}\lambda_{i}$,\hspace*{0.1in}for $i =1,\dots,m+n$.
\end{thm}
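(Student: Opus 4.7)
The plan is to exploit the same scaling trick used for Theorem \ref{5}, now adapted to the bipartite setting. In $K_{m,n}$, label the two parts so that $V_1=\{u_1,\dots,u_m\}$ and $V_2=\{w_1,\dots,w_n\}$. Every vertex in $V_1$ has degree $n$ and every vertex in $V_2$ has degree $m$, and every edge of $K_{m,n}$ joins a vertex of $V_1$ to a vertex of $V_2$. Consequently, for each edge $(v_i,v_j)\in E(K_{m,n})$, the unordered pair of endpoint degrees is exactly $\{m,n\}$, so
\[
\sqrt{d_i^2+d_j^2+d_id_j}=\sqrt{m^2+n^2+mn}
\]
is the same positive constant for every edge.

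Therefore the $ES$ matrix and the adjacency matrix of $K_{m,n}$ differ only by this scalar:
\[
ES(K_{m,n}) \;=\; \sqrt{m^2+n^2+mn}\,\cdot A(K_{m,n}).
\]
This is the main (in fact, essentially only) step; the rest is a one-line consequence. Since multiplying a real symmetric matrix by a positive scalar multiplies its eigenvalues by that same scalar and preserves their ordering, we obtain
\[
\nu_i \;=\; \sqrt{m^2+n^2+mn}\;\lambda_i,\qquad i=1,2,\dots,m+n,
\]
which is the desired identity.

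I do not expect any real obstacle: the argument is formally identical to the regular-graph case of Theorem \ref{5}, the only subtlety being that $K_{m,n}$ is not regular yet the ES weights on edges are still constant because the degree-pair on every edge is the fixed pair $\{m,n\}$. If one wishes, one may additionally combine this with Lemma \ref{2} to write the eigenvalues explicitly as $\sqrt{mn(m^2+n^2+mn)}$, $0$ (with multiplicity $m+n-2$), and $-\sqrt{mn(m^2+n^2+mn)}$, but this is not needed for the statement.
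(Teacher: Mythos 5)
Your proof is correct and follows essentially the same route as the paper: both rest on the identity $ES(K_{m,n})=\sqrt{m^2+n^2+mn}\,A(K_{m,n})$ and the fact that a positive scalar multiple preserves the ordered spectrum. You actually justify that identity (every edge joins a degree-$n$ vertex to a degree-$m$ vertex, so the edge weight is constant), which the paper merely asserts, so no issues.
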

\begin{proof}
From  the ${ES} (K_{m,n})$ $=$ ($\sqrt{m^2+n^2+mn}$) $A(G(K_{m,n}))$.\\ By Lemma \ref{3}, the eigenvalues of  $A(K_{m,n})$  are,
$\lambda_{1} = \sqrt{mn}$, $\lambda_{2} =  \lambda_{3} = \dots = \lambda_{m+n-1} = 0$ and $\lambda_{m+n} = -\sqrt{mn}$.\\
 Therefore, $\nu_{1} = (\sqrt{m^2+n^2+mn})\lambda_{1}$, $\nu_{2} = \nu_{3} = \dots = \nu_{m+n-1}=(\sqrt{m^2+n^2+mn})\lambda_{m+n-1}$ and $\nu_{m+n} = (\sqrt{m^2+n^2+mn})\lambda_{m+n}$.\\ Hence the  desired result obtained.
\end{proof}
The following result gives the $ES$ eigenvalues of star graph $S_{n}$.
\begin{thm}\label{9}
For any positive integer $n$, the $ES$ eigenvalues of star graph $S_{n}$ are $\nu_{i} = (\sqrt{n^2-n+1})\lambda_{i}$
\end{thm}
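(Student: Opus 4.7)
The plan is to exploit the fact that the star $S_n$ is \emph{edge-regular} in the sense needed here: every edge has exactly one endpoint of degree $n-1$ (the center) and one endpoint of degree $1$ (a leaf). This forces every nonzero entry of the $ES$ matrix to be the same constant, so the argument reduces to the same scalar-multiple trick used in Theorem~\ref{5}.

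First, I would record the degrees: the center has degree $n-1$ and each of the $n-1$ leaves has degree $1$. For an arbitrary edge $(v_i, v_j) \in E(S_n)$, the $ES$-weight is therefore
\[
\sqrt{(n-1)^2 + 1^2 + (n-1)\cdot 1} \;=\; \sqrt{n^2 - 2n + 1 + n - 1 + 1} \;=\; \sqrt{n^2 - n + 1}.
\]
Since this value is independent of the chosen edge, every nonzero entry of $ES(S_n)$ equals $\sqrt{n^2 - n + 1}$, and consequently
\[
ES(S_n) \;=\; \sqrt{n^2 - n + 1}\; A(S_n).
\]

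From this matrix identity, the spectrum of $ES(S_n)$ is obtained by scaling the adjacency spectrum by $\sqrt{n^2 - n + 1}$, so $\nu_i = \sqrt{n^2 - n + 1}\,\lambda_i$ for $i = 1, \dots, n$. Invoking Lemma~\ref{4} then yields the explicit values $\nu_1 = \sqrt{(n-1)(n^2 - n + 1)}$, $\nu_2 = \dots = \nu_{n-1} = 0$, and $\nu_n = -\sqrt{(n-1)(n^2 - n + 1)}$, if an explicit list is desired.

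There is no real obstacle: the only thing to verify carefully is the degree computation on the endpoints of an edge in $S_n$ and the algebraic simplification under the square root. Once the scalar identity $ES(S_n) = \sqrt{n^2 - n + 1}\,A(S_n)$ is established, the conclusion follows immediately, exactly as in Corollaries~\ref{6} and~\ref{7}.
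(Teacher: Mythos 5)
Your proposal is correct and follows essentially the same route as the paper: establish the scalar identity $ES(S_n) = \sqrt{n^2-n+1}\,A(S_n)$ and then rescale the adjacency spectrum from Lemma~\ref{4}. The only difference is that you explicitly verify the edge-weight computation $\sqrt{(n-1)^2+1+(n-1)}=\sqrt{n^2-n+1}$, which the paper asserts without calculation.
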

\begin{proof}
The ${ES} (S_{n}) = (\sqrt{n^2-n+1}) A(G(S_{n}))$.\\
 From Lemma \ref{21}, the $A(S_{n})$ eigenvalues are
$\lambda_{1} = \sqrt{n-1}$, $\lambda_{2} = \dots = \lambda_{n-1}=0$ and $\lambda_{n} = -\sqrt{n-1}$ \\ then the $ES$ eigenvalues of $S_{n}$ are , $\nu_{1} = (\sqrt{n^2-n+1})\lambda_{1}$, $\nu_{2} = \dots = \nu_{n-1}=0$ with multiplicity $n-2$ and $\nu_{n} = (\sqrt{n^2-n+1})\lambda_{n}$.
\end{proof}
The following result gives the relation between sum of squares of all eigenvalues of $ES$ matrix of $r$-regular graph in Forgotten and  Second Zagreb indices of the graph.
\begin{thm}\label{10}
Let $G$ be a connected regular graph of order $n\geq 3$ and $\nu_{1} \geq \nu_{2} \geq \dots \geq \nu_{n}$ be the eigenvalues of ${ES}(G)$. Then 
\begin{align*} \label{}
\sum^{n}_{i=1}\nu_{i}^{2} \leq (n-1)({FI+SZ}),
\end{align*}
\end{thm}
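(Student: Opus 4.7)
The plan is to compute $\sum_{i=1}^n \nu_i^2$ exactly as the trace of $ES(G)^2$ and then observe that the inequality reduces to the very mild condition $n \geq 3$ that is already assumed.

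First I would note that since $ES(G)$ is real symmetric, $\sum_{i=1}^n \nu_i^2 = \operatorname{tr}(ES(G)^2) = \sum_{i,j} ES(G)_{ij}^2$. From the definition of the $ES$ matrix, the square of the $(i,j)$ entry is $d_i^2 + d_j^2 + d_i d_j$ whenever $v_i v_j \in E(G)$ and $0$ otherwise, and each unordered edge contributes twice (once as $(i,j)$, once as $(j,i)$). Therefore
\begin{align*}
\sum_{i=1}^n \nu_i^2 \;=\; 2 \sum_{(v_i,v_j)\in E(G)} \bigl(d_i^2 + d_j^2 + d_i d_j\bigr) \;=\; 2\bigl(FI(G) + SZ(G)\bigr).
\end{align*}
For the $r$-regular case specifically, one can cross-check using Theorem \ref{5}: $\nu_i = r\sqrt{3}\,\lambda_i$ gives $\sum \nu_i^2 = 3r^2\sum \lambda_i^2 = 6r^2 m$, while $FI=2r^2 m$ and $SZ = r^2 m$ yield $FI+SZ = 3r^2 m$, matching the identity above.

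Second, since $n \geq 3$ we have $2 \leq n-1$, and $FI(G)+SZ(G) \geq 0$, so
\begin{align*}
\sum_{i=1}^n \nu_i^2 \;=\; 2\bigl(FI+SZ\bigr) \;\leq\; (n-1)\bigl(FI+SZ\bigr),
\end{align*}
which is the required inequality.

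There is really no substantial obstacle here; the only subtle point is recognising that the trace identity $\sum \nu_i^2 = 2(FI+SZ)$ holds and that, once established, the hypothesis $n\geq 3$ is exactly what is needed to convert the equality into the stated upper bound. (In fact the argument goes through for any graph, not only regular ones, so regularity is not used except perhaps to invoke Theorem \ref{5} as an alternative derivation of the same trace.)
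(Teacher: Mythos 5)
Your proof is correct, and it follows the same basic strategy as the paper (evaluating $\sum_i \nu_i^2$ as the trace of $ES(G)^2$), but your execution is actually the sound version of that strategy. The paper's own proof asserts the identity $\operatorname{tr}(ES(G)^2) = (n-1)\sum_{v_iv_j\in E(G)}(d_i^2+d_j^2+d_id_j)$, which is false in general: the correct coefficient is $2$, exactly as you compute (and as the paper itself states later in Theorem \ref{11}). The paper then never supplies the step that converts the true identity $\sum_i\nu_i^2 = 2(FI+SZ)$ into the claimed bound $(n-1)(FI+SZ)$; its final display for the regular case even ends with an unexplained ``$= r(FI+SZ)$''. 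Your observation that the inequality follows simply from $n\geq 3 \Rightarrow 2 \leq n-1$ together with $FI+SZ\geq 0$ is precisely the missing (and only nontrivial) logical step, and your remark that regularity is never actually needed is also accurate --- the hypothesis plays no role beyond what Theorem \ref{11} already covers. In short, your argument is a correct repair of a proof that, as printed, does not establish the stated bound.
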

\begin{proof}
By the definition of the ${ES}(G)$ matrix, the trace of ${ES}(G)$ is  $0$.\\
 Therefore, we have $\sum^{n}_{i=1}\nu_{i} = trace ({ES}(G)) =0$.
\begin{align*}
\sum^{n }_{i=1}\nu_{i}^{2} = trace ({ES}(G))^{2} = (n-1) \sum_{v_{i}v_{j}\in E(G)}({d_{i}^2}+{d_{j}^2}+{d_{i}d_{j}}) 
\end{align*}
\begin{align*}
\leq (n-1)\left(\sum_{v_{i}v_{j}\in E(G)}({d_{i}^2}+{d_{j}^2})+\sum_{v_{i}v_{j}\in E(G)}({d_{i}d_{j}})\right).
\end{align*}
\begin{align*}
\sum^{n }_{i=1}\nu_{i}^{2} \leq (n-1)\left(\sum_{v_{i}v_{j}\in E(G)}({d_{i}^2}+{d_{j}^2})+\sum_{v_{i}v_{j}\in E(G)}({d_{i}d_{j}})\right).
\end{align*}
\begin{align*}
\sum^{n }_{i=1}\nu_{i}^{2} \leq (n-1)(FI+SZ)
\end{align*}
For  $r$-regular graph $G$ degree of every vertex is $r$, which gives  
\begin{align*}
\sum^{n}_{i=1}\nu_{i}^{2}=trace({ES}(G)^{2})\leq(n-1) \left(\sum_{v_{i}v_{j}\in E(G)}({d_{i}^2}+{d_{j}^2})+\sum_{v_{i}v_{j}\in E(G)}({d_{i}d_{j}})\right) = r(FI+SZ).
\end{align*}
\end{proof}
\begin{thm}\label{11}
Let $G$ be a connected graph of order $n\geq 3$ with $m$ edges and $\nu_{1} \geq \nu_{2} \geq \dots \geq \nu_{n}$ be the eigenvalues of ${ES}(G)$. Then
\begin{align*} 
\sum^{n}_{i=1}\nu_{i}^{2} = 2(FI+SZ)
\end{align*}
and 
\begin{align*} 
\sum^{n}_{1\leq i < j \leq n}\nu_{i} \nu_{j} = -(FI+SZ).
\end{align*}
\end{thm}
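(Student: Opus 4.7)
The plan is to treat the two identities separately and reduce both to standard trace computations together with the fact that $\mathrm{tr}(ES(G))=0$.

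For the first identity, I would start from the spectral fact $\sum_{i=1}^{n}\nu_i^{2}=\mathrm{tr}(ES(G)^2)$. Since $ES(G)$ is real and symmetric, the diagonal entries of $ES(G)^2$ are $\bigl(ES(G)^2\bigr)_{ii}=\sum_{j=1}^{n}ES(G)_{ij}^2$, so summing over $i$ counts each ordered pair $(i,j)$ with $v_iv_j\in E(G)$ exactly once. Each unordered edge therefore contributes twice, giving
\[
\sum_{i=1}^{n}\nu_i^{2}=2\sum_{v_iv_j\in E(G)}\bigl(d_i^{2}+d_j^{2}+d_id_j\bigr)=2\!\!\sum_{v_iv_j\in E(G)}\!\!(d_i^{2}+d_j^{2})+2\!\!\sum_{v_iv_j\in E(G)}\!\!d_id_j,
\]
which is $2(FI+SZ)$ by the definitions of the Forgotten index and the Second Zagreb index recalled in Section 1.

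For the second identity, the key observation is that the diagonal entries of $ES(G)$ are zero, so $\mathrm{tr}(ES(G))=\sum_{i=1}^{n}\nu_i=0$. Squaring this identity yields
\[
0=\Bigl(\sum_{i=1}^{n}\nu_i\Bigr)^{2}=\sum_{i=1}^{n}\nu_i^{2}+2\!\!\sum_{1\le i<j\le n}\!\!\nu_i\nu_j,
\]
and substituting the value of $\sum\nu_i^{2}$ obtained in the first part immediately gives $\sum_{1\le i<j\le n}\nu_i\nu_j=-(FI+SZ)$, as desired.

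There is no real obstacle here; both identities are essentially bookkeeping, and the only step requiring any care is recognising that $\mathrm{tr}(ES(G)^2)$ double-counts edges (because each edge $v_iv_j$ contributes to both $ES(G)_{ij}^2$ and $ES(G)_{ji}^2$) and that the diagonal entries vanish so that Newton's first identity $e_1=p_1$ reduces to $0$, letting me use $e_2=\tfrac12(p_1^2-p_2)=-\tfrac12 p_2$. Notably the hypothesis of $r$-regularity used in Theorem~\ref{10} is not needed here, since the trace computation works for any simple graph.
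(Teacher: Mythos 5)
Your proposal is correct and follows essentially the same route as the paper: compute $\operatorname{tr}(ES(G)^2)$ to get $2(FI+SZ)$, then use $\operatorname{tr}(ES(G))=0$ together with the expansion of $\bigl(\sum_i\nu_i\bigr)^2$ to obtain the second identity. The only difference is that you spell out the edge double-counting in the trace computation, which the paper leaves implicit.
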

\begin{proof}
By the definition of the ${ES}(G)$ matrix, $\sum^{n}_{i=1}\nu_{i} = trace ({ES}(G)) =0$. \\
As $G$ be a connected,
\begin{align*}
\sum^{n}_{i=1}\nu_{i}^{2} = trace ({ES}(G)^{2})
 = 2 \sum_{v_{i}v_{j}\in E(G)}({d_{i}^2}+{d_{j}^2}+{d_{i}d_{j}})
\end{align*}
\begin{align*}
\sum^{n}_{i=1}\nu_{i}^{2} = 2\left(\sum_{v_{i}v_{j}\in E(G)}({d_{i}^2}+{d_{j}^2})+(\sum_{v_{i}v_{j}\in E(G)}{d_{i}d_{j}})\right)
\end{align*}
\begin{align*}
\sum^{n}_{i=1}\nu_{i}^{2} = 2(FI+SZ)
\end{align*}
Hence, 
\begin{align*}
\sum^{n}_{i=1}\nu_{i}^{2} = 2(FI+SZ).
\end{align*}
Moreover,
\begin{align*}
\sum^{n}_{1\leq i < j \leq n}\nu_{i} \nu_{j} = \frac{1}{2} \left(\left(\sum^{n}_{i=1}\nu_{i}\right)^{2} - \sum^{n}_{i=1}\nu_{i}^{2}\right) = -(FI+SZ).
\end{align*}
\end{proof}
\noindent\textbf{Perron-Frobenius Theorem:}\label{Per-Frob}
If all entries of $n*n$ matrix $A$ are positive,then it has unique maximal eigenvalue and corresponding eigenvector has positive entries.

\begin{lem}\label{30} \cite{R.Liu}
Let $A$ be an $n\times n$ irreducible non-negative symmetric real matrix, $n\geq 2$. Let $\alpha_{1}$ be the maximum eigenvalue of $A$ and ${\bf x}$ be the unit Perron-Frobenius eigenvector of $A$. $A$ has $s$ $(2\leq s \leq n)$ distinct eigenvalues if and only if there exits $s-1$ real numbers $\alpha_{2}, \dots, \alpha_{s}$ with $\alpha_{1} > \alpha_{2} > \dots > \alpha_{s}$ such that
\begin{equation*}
\prod^{s}_{i=2}(A-\alpha_{i}I) = \prod^{s}_{i=2}(\alpha_{1}-\alpha_{i}){\bf{xx}}^{t}.
\end{equation*}
Moreover, $\alpha_{1} > \alpha_{2} > \dots > \alpha_{s}$ are exactly the $s$ distinct eigenvalues of $A$.
\end{lem}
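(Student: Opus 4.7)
The plan is to diagonalize $A$ via its orthogonal spectral decomposition and turn the matrix identity into a statement about how the scalars $\alpha_i$ interact with the eigenspace projections. Since $A$ is real symmetric, write $A=\sum_{i=1}^{s}\alpha_i P_i$, where $P_1,\dots,P_s$ are the mutually orthogonal projections onto the eigenspaces associated with the distinct eigenvalues $\alpha_1>\alpha_2>\dots>\alpha_s$; these satisfy $P_iP_j=\delta_{ij}P_i$ and $\sum_i P_i=I$. The Perron-Frobenius theorem, applicable because $A$ is irreducible and non-negative, guarantees that $\alpha_1$ is a simple eigenvalue with strictly positive unit eigenvector ${\bf x}$, so $P_1={\bf x}{\bf x}^t$.

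For the forward direction, I would substitute the decomposition into each factor $A-\alpha_i I=\sum_k(\alpha_k-\alpha_i)P_k$ and expand the product. The orthogonality relation $P_kP_j=\delta_{kj}P_k$ collapses all cross terms, leaving
\[
\prod_{i=2}^{s}(A-\alpha_i I)=\sum_{k=1}^{s}\Bigl(\prod_{i=2}^{s}(\alpha_k-\alpha_i)\Bigr)P_k.
\]
For every $k\ge 2$ the coefficient contains the vanishing factor $(\alpha_k-\alpha_k)$, so only the $k=1$ summand survives, yielding $\prod_{i=2}^{s}(\alpha_1-\alpha_i)P_1=\prod_{i=2}^{s}(\alpha_1-\alpha_i){\bf x}{\bf x}^t$, which is the claimed identity.

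For the converse, suppose the identity holds. Set $p(x)=\prod_{i=2}^{s}(x-\alpha_i)$, so that $p(A)=p(\alpha_1){\bf x}{\bf x}^t$; multiplying on the left by $(A-\alpha_1 I)$ and using $A{\bf x}=\alpha_1{\bf x}$ gives $\prod_{i=1}^{s}(A-\alpha_i I)=0$. Hence the polynomial $\prod_{i=1}^{s}(x-\alpha_i)$ annihilates $A$, so every eigenvalue of $A$ lies in $\{\alpha_1,\dots,\alpha_s\}$, producing at most $s$ distinct eigenvalues. To see that each $\alpha_i$ with $i\ge 2$ is actually attained, apply the identity to an eigenvector $v$ of $A$ with eigenvalue $\mu\ne\alpha_1$; orthogonality of eigenspaces for a symmetric matrix gives ${\bf x}^t v=0$, whence $p(\mu)v=0$ and so $\mu\in\{\alpha_2,\dots,\alpha_s\}$. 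Coupled with an exhaustion argument—if some $\alpha_j$ ($j\ge 2$) were not an eigenvalue, then $(A-\alpha_j I)$ would be invertible and could be cancelled from the identity using $(A-\alpha_j I)^{-1}{\bf x}=(\alpha_1-\alpha_j)^{-1}{\bf x}$, yielding a strictly shorter identity of the same form and contradicting the length $s-1$—this forces $\{\alpha_2,\dots,\alpha_s\}$ to coincide with the non-Perron eigenvalues, proving the ``Moreover'' clause.

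The main technical obstacle is not the spectral algebra, which is essentially bookkeeping with orthogonal projections, but the exactness asserted in the ``Moreover'' clause. Turning the containment of the eigenvalue set inside $\{\alpha_1,\dots,\alpha_s\}$ into equality requires the cancellation argument above and genuinely uses both the irreducibility of $A$ and the simplicity of the Perron eigenvalue (through $P_1={\bf x}{\bf x}^t$ being rank one); without these the rank-one form of the right-hand side could not be exploited to pin down the scalars.
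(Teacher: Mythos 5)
The paper offers no proof of this lemma; it is quoted from the cited reference \cite{R.Liu}, so there is no in-paper argument to compare against and your attempt must be judged on its own. Your forward direction is correct and standard: writing $A=\sum_k\alpha_kP_k$ with mutually orthogonal spectral projections and using that the Perron root is simple with positive eigenvector (so $P_1=\mathbf{x}\mathbf{x}^t$) collapses the product to the $k=1$ term. Your first steps in the converse are also sound: multiplying the identity by $(A-\alpha_1I)$ shows $\prod_{i=1}^{s}(A-\alpha_iI)=0$, hence every eigenvalue of $A$ lies in $\{\alpha_1,\dots,\alpha_s\}$ and $A$ has \emph{at most} $s$ distinct eigenvalues.

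The gap is the exhaustion step. Cancelling an invertible factor $(A-\alpha_jI)$ does produce a shorter identity of the same form, but that is not a contradiction: nothing in the hypothesis prevents identities of two different lengths from holding simultaneously, so ``contradicting the length $s-1$'' does not follow. In fact the implication you are trying to establish is false as stated. Let $A$ be the adjacency matrix of $K_3$, so $\alpha_1=2$, $\mathbf{x}=(1,1,1)^t/\sqrt{3}$, and $A$ has exactly two distinct eigenvalues. Taking $s=3$, $\alpha_2=-1$, $\alpha_3=-2$ gives
\[
(A+I)(A+2I)=J(J+I)=4J=(2+1)(2+2)\,\mathbf{x}\mathbf{x}^t,
\]
so the displayed identity holds with $s-1=2$ factors even though $A$ does not have three distinct eigenvalues. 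The identity is equivalent only to the statement that every non-Perron eigenvalue of $A$ lies in $\{\alpha_2,\dots,\alpha_s\}$; to conclude ``exactly $s$'' one needs the extra hypothesis that $s$ is minimal among all lengths for which such an identity exists (equivalently, that each $\alpha_i$ really is an eigenvalue), and your proof cannot be repaired without adding that hypothesis. Note that the weaker ``at most $s$, contained in $\{\alpha_1,\dots,\alpha_s\}$'' version, which you do prove, is all that the rest of the paper actually uses: the characterization of graphs with two distinct $ES$ eigenvalues and the diameter bound both invoke only the forward direction.
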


 Now as $ES$ matrix, $ES(G)$ of a connected graph G, which is an irreducible non-negative symmetric real matrix. Then by lemma  \ref{30} ,we have
\begin{lem} \label{31}
Let $G$ be a connected graph of order $n\geq 2$ and ${ES}(G)$ be its $ES$ matrix. Let $\nu_{1}$ be the largest eigenvalue of ${ES}(G)$ and $\bf{x}$ be its corresponding unit column eigenvector. Then ${ES}(G)$ has $k$ $(2\leq m \leq n)$ distinct eigenvalues if and only if there exist $m-1$ real numbers $\nu_{2}, \nu_{3}, \dots, \nu_{m}$ with $\nu_{1} > \nu_{2} > \nu_{3} > \dots > \nu_{m}$ such that
\begin{equation}
\prod^{m}_{i=2}({ES}(G)-\nu_{i}I) = \prod^{m}_{i=2}(\nu_{1}-\nu_{i}){\bf{xx}}^{t}.
\end{equation}
Moreover, $\nu_{1}, \nu_{2}, \nu_{3}, \dots, \nu_{m}$ are exactly the $m$ distinct eigenvalues of ${ES}(G)$. 
\end{lem}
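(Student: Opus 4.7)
The plan is to reduce the statement directly to Lemma \ref{30}, since Lemma \ref{31} is just the specialization of that general spectral characterization to the matrix $ES(G)$. So the only real work is verifying that $ES(G)$ satisfies all four hypotheses of Lemma \ref{30}: that it is real, symmetric, non-negative, and irreducible (with order at least $2$, which is given).

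First I would check the easy structural properties of $ES(G)$. Realness and symmetry are immediate from the definition, because $\sqrt{d_i^2+d_j^2+d_id_j}$ is a real quantity invariant under swapping $i$ and $j$. Non-negativity is also immediate: each off-diagonal entry is either $0$ or a positive square root, and diagonal entries are $0$. So the only genuine point to verify is irreducibility.

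Next I would handle irreducibility using connectedness of $G$. Since $G$ is connected, for any two indices $i,j$ there is a walk $v_i = u_0, u_1, \ldots, u_k = v_j$ in $G$, and along every edge of this walk the corresponding entry of $ES(G)$ is strictly positive (it is $\sqrt{d^2+{d'}^2+dd'}>0$ because degrees in a connected graph of order $\ge 2$ are positive). Consequently the $(i,j)$ entry of $ES(G)^k$ is strictly positive, which shows $ES(G)$ is irreducible in the standard non-negative-matrix sense. This establishes all hypotheses of Lemma \ref{30}.

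Finally, I would apply Lemma \ref{30} with $A = ES(G)$, $\alpha_1 = \nu_1$, and the unit Perron-Frobenius eigenvector $\mathbf{x}$. The lemma asserts that $A$ has exactly $s$ distinct eigenvalues $\alpha_1 > \alpha_2 > \cdots > \alpha_s$ if and only if $\prod_{i=2}^{s}(A - \alpha_i I) = \prod_{i=2}^{s}(\alpha_1 - \alpha_i)\,\mathbf{x}\mathbf{x}^t$. Translating the notation (writing $m$ for $s$ and $\nu_i$ for $\alpha_i$) gives exactly the identity claimed in Lemma \ref{31}, and the ``moreover'' clause follows verbatim. There is no main obstacle here beyond the irreducibility check, which is short; the bulk of the proof is really just observing that $ES(G)$ falls into the class of matrices to which Lemma \ref{30} applies.
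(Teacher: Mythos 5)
Your proposal is correct and follows essentially the same route as the paper: the paper likewise obtains Lemma \ref{31} by observing that $ES(G)$ of a connected graph is an irreducible non-negative symmetric real matrix and then invoking Lemma \ref{30}. The only difference is that you actually verify irreducibility via connectedness, whereas the paper merely asserts it, so your write-up is if anything more complete.
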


\begin{thm} \label{} 
Let $G$ be a connected graph of order $n\geq 3$. Then $G$ has two distinct the $ES$ eigenvalues if and only if $G=K_n$. 
\end{thm}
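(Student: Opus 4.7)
The reverse implication is immediate from Corollary \ref{6}: when $G=K_n$, the $ES$ spectrum consists of the two values $(n-1)^2\sqrt{3}$ and $-(n-1)\sqrt{3}$. So the content is in the forward direction, and my plan is to extract the structure of $G$ directly from the rank-one identity provided by Lemma \ref{31}.

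Assume $G$ is connected of order $n\geq 3$ and that $ES(G)$ has exactly two distinct eigenvalues $\nu_1>\nu_2$. Applying Lemma \ref{31} with $m=2$, I get
\begin{equation*}
ES(G)-\nu_2 I \;=\; (\nu_1-\nu_2)\,\mathbf{x}\mathbf{x}^t,
\end{equation*}
where $\mathbf{x}=(x_1,\ldots,x_n)^t$ is the unit Perron--Frobenius eigenvector associated with $\nu_1$. Since $ES(G)$ has zero diagonal, comparing the $(i,i)$ entries on both sides yields
\begin{equation*}
-\nu_2 \;=\; (\nu_1-\nu_2)\,x_i^2 \qquad (i=1,\ldots,n),
\end{equation*}
so all $x_i^2$ are equal. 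Combined with $\sum_i x_i^2=1$, this forces $x_i^2=1/n$ for every $i$; and by the Perron--Frobenius theorem $\mathbf{x}$ has strictly positive entries, so $x_i=1/\sqrt{n}$.

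Substituting back, every off-diagonal entry satisfies
\begin{equation*}
ES(G)_{ij} \;=\; \frac{\nu_1-\nu_2}{n} \;>\; 0 \qquad (i\ne j),
\end{equation*}
a strictly positive constant. In particular $ES(G)_{ij}\neq 0$ for every pair $i\neq j$, which by the definition of the $ES$ matrix means that $v_iv_j\in E(G)$ for all $i\neq j$. Hence $G=K_n$, completing the equivalence.

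The argument is essentially a direct exploitation of Lemma \ref{31}, so I do not anticipate a hard technical obstacle; the only place one must be careful is in invoking Perron--Frobenius to pin down the sign of the components of $\mathbf{x}$ (so that the constant value of $x_i^2$ determines $x_i$ uniquely) and in noting that the positivity of $\nu_1-\nu_2$ is what guarantees that the shared off-diagonal value $(\nu_1-\nu_2)/n$ is nonzero, hence forces every pair of vertices to be adjacent rather than merely allowing nonedges with matching zero entries.
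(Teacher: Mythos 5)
Your proof is correct and follows essentially the same route as the paper: both directions rest on the rank-one identity $ES(G)-\nu_2 I=(\nu_1-\nu_2)\mathbf{xx}^t$ from Lemma \ref{31} together with the positivity of the Perron--Frobenius eigenvector, which forces all off-diagonal entries of $ES(G)$ to be positive and hence $G=K_n$. You merely supply more detail than the paper (computing $x_i=1/\sqrt{n}$ from the zero diagonal, which is not strictly needed since $x_ix_j>0$ already suffices), and this extra care is harmless.
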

\begin{proof}
Suppose $G$ has two distinct ES eigenvalues, $\nu_{1} > \nu_{2}$. Then by lemma \ref{31}, \\
\begin{equation}
 \ ES(G) =(\nu_{1}-\nu_{2}){\bf{xx}}^{t}+ \nu_{2}I,\\
\end{equation}
 From above equation the off diagonal elements of $ES(G)$ are all positive.\\
 Gives $G$ is complete graph ,$K_n$ i.e. $G=K_n$ \\
  Conversely,if $G$ is complete graph,then \\
 \begin{equation}
  \ ES(G) =(n-1)\sqrt{3} A(G) 
 \end{equation}
thus
 \begin{equation}
  \nu_{i} =(n-1)\sqrt{3} \lambda_{i} ,i=1,2,.....,n.
 \end{equation}\\
 Hence $G$ has  two distinct $ES$ eigenvalues ,$(n-1)^{2}\sqrt{3}$ and $-(n-1)\sqrt{3}$ 
\end{proof}
\begin{lem}\label{16}\cite{Bap}
Let$G$ be connected bipartite graph. If $\lambda$ is eigenvalue of $G$ with multiplicity $k$ then $ -\lambda $ is also eigenvalue of $G$ with same multiplicity $k$. i.e. eigenvalues are symmetric about origin. 
\end{lem}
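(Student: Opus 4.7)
The plan is to exploit the block structure that a bipartition forces on the adjacency matrix. Let $G$ be a connected bipartite graph with bipartition $V(G) = V_1 \cup V_2$, where $|V_1| = p$ and $|V_2| = q$. First I would order the vertices so that the vertices of $V_1$ come before those of $V_2$; with respect to this ordering, the adjacency matrix takes the block form
\begin{equation*}
A(G) = \begin{pmatrix} 0_{p\times p} & B \\ B^{T} & 0_{q\times q} \end{pmatrix},
\end{equation*}
because edges only run between the two parts.

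Next I would set up an explicit sign-flipping involution on eigenvectors. Suppose $\lambda$ is an eigenvalue of $A(G)$ with eigenvector $\binom{x}{y}$, where $x \in \mathbb{R}^{p}$ and $y \in \mathbb{R}^{q}$; then the eigenvalue equation splits as $By = \lambda x$ and $B^{T}x = \lambda y$. A direct computation shows
\begin{equation*}
A(G)\begin{pmatrix} x \\ -y \end{pmatrix} = \begin{pmatrix} -By \\ B^{T}x \end{pmatrix} = \begin{pmatrix} -\lambda x \\ \lambda y \end{pmatrix} = -\lambda \begin{pmatrix} x \\ -y \end{pmatrix},
\end{equation*}
so $-\lambda$ is an eigenvalue with eigenvector $\binom{x}{-y}$. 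A cleaner way to package the same idea is to introduce the diagonal sign matrix $D = \mathrm{diag}(I_{p}, -I_{q})$ and observe that $D A(G) D^{-1} = -A(G)$; hence $A(G)$ and $-A(G)$ are similar, so their spectra coincide as multisets.

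Finally I would argue that the correspondence preserves multiplicity. The linear map $T : \binom{x}{y} \mapsto \binom{x}{-y}$ is an involutive isomorphism from the $\lambda$-eigenspace of $A(G)$ onto the $(-\lambda)$-eigenspace; equivalently, conjugation by $D$ sends the $\lambda$-eigenspace of $A(G)$ to the $\lambda$-eigenspace of $-A(G)$, which is the $(-\lambda)$-eigenspace of $A(G)$. Since $T$ (or equivalently conjugation by $D$) is invertible, the two eigenspaces have equal dimension, so the algebraic (and geometric) multiplicity of $-\lambda$ equals that of $\lambda$. There is no real obstacle here beyond keeping track of the block decomposition; the only subtlety worth flagging is that connectedness of $G$ is not actually needed for the symmetry statement itself, though it is consistent with how the lemma will be applied later in the paper.
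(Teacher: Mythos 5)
Your argument is correct and is the standard proof of this fact; the paper itself states the lemma without proof, simply citing Bapat, so there is no internal argument to compare against. One point worth adding: the identical block-structure argument applies verbatim to the $ES$ matrix of a bipartite graph (which has the same zero pattern as $A(G)$), and that is the form in which the lemma is actually invoked later in the proof of Theorem \ref{17}.
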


\begin{thm}\label{17}
A connected bipartite graph $G$  of order $n \geq 3$ has three distinct $ES$ eigenvalues if and only if $G$ is a complete bipartite graph.
\end{thm}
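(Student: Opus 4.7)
The plan is to prove the two implications separately, with the forward direction essentially immediate from Theorem \ref{8} and the converse relying on the bipartite block structure of $ES(G)$ together with Lemmas \ref{16} and \ref{31}.

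For the forward direction, if $G=K_{p,q}$ then Theorem \ref{8} gives the $ES$ eigenvalues as $\sqrt{p^2+q^2+pq}$ times the adjacency eigenvalues $\sqrt{pq},\,0,\,-\sqrt{pq}$ (with $0$ of multiplicity $p+q-2$), producing exactly three distinct values $\pm\sqrt{pq\,(p^2+q^2+pq)}$ and $0$.

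For the converse, I would start with a connected bipartite $G$ whose $ES$ matrix has exactly three distinct eigenvalues $\nu_1>\nu_2>\nu_3$. Writing the bipartition as $V(G)=V_1\cup V_2$, the $ES$ matrix has the off-diagonal block form
\[
ES(G)=\begin{pmatrix} 0 & B \\ B^{t} & 0 \end{pmatrix},
\]
and the symmetry argument used in Lemma \ref{16} applies verbatim (it depends only on this block shape), so the $ES$ spectrum is symmetric about $0$. Combined with $\nu_1>\nu_2>\nu_3$, this forces $\nu_3=-\nu_1$ and $\nu_2=0$.

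Next I would apply Lemma \ref{31} with $m=3$, $\nu_2=0$, $\nu_3=-\nu_1$ to obtain
\[
ES(G)\bigl(ES(G)+\nu_1 I\bigr)=2\nu_1^{2}\,\mathbf{x}\mathbf{x}^{t},
\]
where $\mathbf{x}$ is the strictly positive Perron--Frobenius eigenvector for $\nu_1$. The finishing step is to read off the $V_1\times V_2$ block on each side. Since $ES(G)^{2}=\mathrm{diag}(BB^{t},B^{t}B)$ has zero off-diagonal block, the $V_1\times V_2$ block of the left-hand side is simply $\nu_1 B$; on the right, the $(i,j)$ entry with $i\in V_1$, $j\in V_2$ is $2\nu_1^{2}x_i x_j>0$. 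Therefore every entry of $B$ is strictly positive, meaning every $v_i\in V_1$ is adjacent to every $v_j\in V_2$, so $G=K_{|V_1|,|V_2|}$.

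The hard part will be establishing $\nu_2=0$; this hinges on the fact that Lemma \ref{16} applies to the weighted matrix $ES(G)$ and not only to the unweighted adjacency matrix, which is fine because its proof uses only the bipartite block decomposition. After that, the block computation on both sides of the annihilator identity from Lemma \ref{31} is routine, exactly paralleling the $K_n$ case proved just above.
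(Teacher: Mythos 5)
Your proposal is correct, and for the converse it is substantially more complete than the paper's own argument. Both you and the paper use the bipartite symmetry of the spectrum (Lemma \ref{16}, which as you note carries over to the weighted matrix $ES(G)$ because it depends only on the block shape $\left(\begin{smallmatrix} 0 & B \\ B^{t} & 0\end{smallmatrix}\right)$) to force the three distinct eigenvalues to be $\nu_1$, $0$, $-\nu_1$. Where you diverge is the decisive final step: the paper simply writes ``thus by Theorem \ref{8}, $G$ is a complete bipartite graph,'' which is a non sequitur --- Theorem \ref{8} only computes the $ES$ spectrum of a graph already known to be complete bipartite; it does not assert that a bipartite graph with spectrum $\{\nu_1,0,-\nu_1\}$ must be complete bipartite. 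You close this gap by invoking Lemma \ref{31} with $m=3$ to get the identity $ES(G)\bigl(ES(G)+\nu_1 I\bigr)=2\nu_1^{2}\,\mathbf{x}\mathbf{x}^{t}$ and then reading off the $V_1\times V_2$ block: since $ES(G)^2$ is block diagonal, that block of the left side is $\nu_1 B$, while the right side has all entries strictly positive by Perron--Frobenius, so $B$ has no zero entries and $G$ is complete bipartite. This mirrors the technique the paper itself uses in the two-distinct-eigenvalue theorem for $K_n$, and it is the argument the paper should have given; your version is the one to keep.
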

\begin{proof}
By Perron-Frobenius theorem,$\nu_{1}>0$ ,$\nu_{n}$ be simple and $\nu_{i}\neq \nu_{1},\nu_{n}$ be three distinct $ES$ eigenvalues of $G$.
such that,$\nu_{i}\neq 0$ \\
AS $G$ is connected bipartite graph.Then by lemma \ref{16}\\
Then $G$ will have 4-distinct eigenvalues.\\
Contradiction to that $\nu_{i}\neq 0$ $ES$  eigenvalue of $G$.\\
Therefore , $\nu_{i} = 0$ is $n-2$ -times $ES$ eigenvalue of $G$\\
Gives $G$ has 3-distinct $ES$ eigenvalues viz.  $\nu_{1}>0 ,\nu_{n}$ and $\nu_{i}=0 ,i\neq 1,n$ \\
Thus by theorem \ref{8},G is complete bipartite graph.\\
Conversly, Let $G$ is complete bipartite graph of order $n\geq 3$ then obviously $G$ is bipartite graph with three distinct ES eigenvalues.
\end{proof}

The following result gives the relation between diameter of graph $G$ and $ES$ eigenvalues of the graph $G$ for this we have following lemma.
\begin{lem}\label{18}
Let $G$ be a connected graph with adjacency matrix $A(G)$.Then the number of different paths of length $k$ from vertices $i$ and $j$ is $(i,j)^{th}$ entry of $(A(G))^{k}$
\end{lem}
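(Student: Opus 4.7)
The plan is a straightforward induction on the walk length $k$. This is a classical identity, and the only nontrivial thing to flag at the outset is a terminological point: the statement as written says ``paths,'' but the identity $(A^{k})_{ij}=\#\{\text{traversals of length }k\text{ from }v_i\text{ to }v_j\}$ really counts \emph{walks}, i.e.\ sequences $v_i=u_0,u_1,\dots,u_k=v_j$ with $u_{t-1}u_t\in E(G)$ for all $t$, allowing repeated vertices and edges. I would begin the proof by explicitly adopting this reading so that the induction actually proves the claim.

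For the base case $k=1$, the definition of $A(G)$ gives $(A(G))_{ij}=1$ when $v_iv_j\in E(G)$ and $0$ otherwise, which is exactly the number of walks of length one between $v_i$ and $v_j$. For the inductive step, assume the identity holds for some $k\ge 1$. Writing $A(G)^{k+1}=A(G)^{k}\cdot A(G)$ and applying the matrix product formula,
\begin{equation*}
\bigl(A(G)^{k+1}\bigr)_{ij} \;=\; \sum_{\ell=1}^{n}\bigl(A(G)^{k}\bigr)_{i\ell}\,(A(G))_{\ell j}.
\end{equation*}
Every walk of length $k+1$ from $v_i$ to $v_j$ is uniquely specified by its penultimate vertex $v_\ell$ together with a walk of length $k$ from $v_i$ to $v_\ell$ and an edge $v_\ell v_j$; conversely, every such pair concatenates to a unique walk of length $k+1$. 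By the inductive hypothesis the $\ell$-th summand counts exactly those walks routed through $v_\ell$ at step $k$, and summing over $\ell$ gives the total count of walks of length $k+1$ from $v_i$ to $v_j$.

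There is no real obstacle here beyond the bookkeeping; the main thing to get right is the bijection between walks of length $k+1$ and pairs (walk of length $k$, terminal edge), which is what underwrites the matrix multiplication. Connectedness of $G$ is not used in the argument and could be dropped from the hypothesis; I would mention this in a short remark after the proof, since later applications (for instance, relating the diameter of $G$ to the number of distinct $ES$-eigenvalues via powers of $A(G)$ or $ES(G)$) will want to invoke the identity in its strongest form.
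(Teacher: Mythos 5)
Your induction is the standard and correct argument for this classical identity; the paper itself states Lemma \ref{18} without any proof, treating it as a known fact, so there is no authorial argument to compare against. Your terminological flag is not a quibble but a genuine correction: as literally stated (with ``paths'' read as simple paths), the lemma is false --- for example in $K_3$ the $(1,1)$ entry of $A^2$ is $2$, counting the closed walks $v_1v_2v_1$ and $v_1v_3v_1$, which are not paths --- and the identity only holds for walks, which is also the form needed in the proof of Theorem \ref{19}, where a positive entry of $(ES(G))^k$ is used to infer the existence of a connecting walk (hence a path of length at most $k$) and bound the diameter. Your remark that connectedness is superfluous is likewise correct.
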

\begin{thm}\label{19}
Let $G$ be a connected graph of order $n$ with $m$ distinct $ES$ eigenvalues.  Then diam $(G)\leq m-1$.
\end{thm}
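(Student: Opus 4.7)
The plan is to adapt the classical spectral–diameter argument from adjacency matrices to the $ES$ matrix, exploiting the fact that $ES(G)$ and $A(G)$ have identical zero/nonzero patterns and that every nonzero entry of $ES(G)$ is \emph{strictly positive}, so no cancellation can occur in walk-sums.

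First, I would establish the $ES$ analogue of Lemma \ref{18}: for every integer $k \geq 0$, the $(i,j)$-entry of $ES(G)^k$ equals the sum, over all walks $v_i = w_0, w_1, \dots, w_k = v_j$ of length $k$ in $G$, of the product of $ES$-weights $\prod_{t=0}^{k-1}\sqrt{d_{w_t}^2 + d_{w_{t+1}}^2 + d_{w_t}d_{w_{t+1}}}$ along the walk. This follows by induction on $k$ exactly as for $A(G)$. Since each factor is strictly positive, the sum is $0$ if and only if no walk of length $k$ from $v_i$ to $v_j$ exists, and strictly positive otherwise. In particular, if $d(v_i,v_j) = k$ then $(ES(G)^k)_{ij} > 0$, and if $d(v_i,v_j) > k$ then $(ES(G)^k)_{ij} = 0$.

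Next I would argue by contradiction. Suppose $\mathrm{diam}(G) \geq m$, and choose vertices $u,v$ with $d(u,v) = d \geq m$. Because $ES(G)$ has exactly $m$ distinct eigenvalues, its minimal polynomial has degree $m$, so the matrices $I,\, ES(G),\, ES(G)^2,\, \dots,\, ES(G)^{m-1}$ span the unital algebra generated by $ES(G)$. Hence there exist scalars $c_0, c_1, \dots, c_{m-1}$ with
\begin{equation*}
ES(G)^d \;=\; c_0 I + c_1\, ES(G) + c_2\, ES(G)^2 + \dots + c_{m-1}\, ES(G)^{m-1}.
\end{equation*}
Read off the $(u,v)$-entry. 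Since $u \neq v$, $(I)_{uv} = 0$; and for each $k \in \{1,\dots,m-1\}$ we have $k < d = d(u,v)$, so $(ES(G)^k)_{uv} = 0$ by the preceding step. Thus the right-hand side contributes $0$ in entry $(u,v)$, whereas $(ES(G)^d)_{uv} > 0$ on the left, a contradiction. Therefore $\mathrm{diam}(G) \leq m-1$.

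The only non-routine step is the first: verifying that $(ES(G)^k)_{ij} > 0$ precisely when a walk of length $k$ from $v_i$ to $v_j$ exists. This is the main obstacle only in the sense that it is the one place where we use something specific to $ES(G)$; once positivity of all nonzero entries is noted, the walk-counting argument for $A(G)$ transfers verbatim, and the rest of the proof is a direct consequence of the minimal polynomial having degree equal to the number of distinct eigenvalues.
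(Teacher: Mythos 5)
Your proof is correct, but it follows a genuinely different route from the paper's. The paper invokes its Lemma \ref{31} (the Perron--Frobenius rank-one identity adapted from Liu and Shiu): the product $\prod_{i=2}^{m}(ES(G)-\nu_i I)$, a monic polynomial of degree $m-1$ in $ES(G)$, equals $\prod_{i=2}^{m}(\nu_1-\nu_i)\,\mathbf{x}\mathbf{x}^{t}$, which is entrywise strictly positive because the Perron vector $\mathbf{x}$ of the irreducible nonnegative matrix $ES(G)$ is positive; from positivity of each off-diagonal entry of this matrix the paper concludes that some power $ES(G)^{k}$ with $1\leq k\leq m-1$ has a positive $(i,j)$-entry, hence a walk of length at most $m-1$ joins any two vertices. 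You instead use only that $ES(G)$ is real symmetric, hence diagonalizable, so its minimal polynomial has degree exactly $m$; combined with your walk-sum characterization of $(ES(G)^{k})_{ij}$ (which is the $ES$-analogue of the paper's Lemma \ref{18} and is proved by the same induction, with positivity of the edge weights ruling out cancellation), the assumption $\mathrm{diam}(G)\geq m$ yields an immediate contradiction by reading off the $(u,v)$-entry of the relation expressing $ES(G)^{d}$ in terms of lower powers. Your argument is more elementary and self-contained: it does not need irreducibility, the Perron eigenvector, or the cited structural lemma, only diagonalizability. What the paper's approach buys is that the same rank-one identity is reused elsewhere (e.g., in the characterization of graphs with two distinct $ES$ eigenvalues), so the diameter bound comes almost for free once that machinery is in place. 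Both proofs ultimately rest on the same fact, that $I, ES(G),\dots,ES(G)^{m-1}$ span the algebra generated by $ES(G)$.
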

\begin{proof}
Let ${ES}(G)$ be the $ES$ matrix of $G$ and $\nu_{1}\geq \nu_{2}\geq \dots \geq\nu_{n}$ be its $m$ distinct eigenvalues.\\
 Let ${\bf x}$ be the unit eigenvector of ${ES}(G)$ corresponding to the largest eigenvalue $\nu_{1}$. Where  ${\bf x}$ is positive vector. Now, from Theorem \ref{12} it follows that,
\begin{equation*}
\prod^{m}_{i=2}({ES}(G)-\nu_{i}I) = ({ES}(G))^{m-1} + b_{1}({ES}(G))^{m-2} + \dots + b_{m-2}({ES}(G)) + b_{m-1} I =  \prod^{m}_{i=2}(\nu_{1}-\nu_{i}){\bf{xx}}^{t} = N.
\end{equation*}
Observe that $(N)_{ij}>0$ for each $i, j = 1, 2, \dots, n.$  Therefore, for $i \neq j$, there is a positive integer $k$ with $1\leq k \leq m-1$ such that $(({ES}(G))^{k})_{ij}>0$,                 which implies that is a path of length $k$ between vertices $v_{i}$ and $v_{j}$, that is, diam$(G)\leq m-1$. This complete the proof.
\end{proof}
 Now we obtain the lower bound of largest $ES$ eigenvalue of the graph as follows. 
\begin{thm} \label{20}
Let $G$ be a connected graph of order $n\geq 3$ and $m$ edges. Then
\begin{equation} \label{}
\nu_1 > \sqrt{\frac{2(FI+SZ)}{n}}.
\end{equation}
\end{thm}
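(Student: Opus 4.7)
The plan is to combine the two moment identities for the $ES$ spectrum with the Perron--Frobenius structure of $ES(G)$. From Theorem \ref{11} we already have
\[
\sum_{i=1}^{n}\nu_i \;=\; 0 \qquad\text{and}\qquad \sum_{i=1}^{n}\nu_i^{2} \;=\; 2(FI+SZ),
\]
so the only new ingredient is an estimate that lets us isolate $\nu_1$. Since $G$ is connected, $ES(G)$ is an irreducible non-negative symmetric real matrix, so by the Perron--Frobenius theorem $\nu_1$ is simple and equals the spectral radius of $ES(G)$; in particular $\nu_1 \geq |\nu_i|$ for every $i$.

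From there the main estimate is immediate: squaring the Perron--Frobenius inequality and summing gives $n\,\nu_1^{2} \geq \sum_{i=1}^n \nu_i^{2} = 2(FI+SZ)$, which already yields the non-strict version
\[
\nu_1 \;\geq\; \sqrt{\frac{2(FI+SZ)}{n}}.
\]
To upgrade this to a strict inequality, I would argue by contradiction. If equality held, then $\nu_i^{2}=\nu_1^{2}$ for every $i$, so each $\nu_i\in\{\nu_1,-\nu_1\}$. Because $\nu_1$ is simple (Perron--Frobenius), we must have $\nu_i=-\nu_1$ for every $i\geq 2$. The trace condition then forces
\[
0 \;=\; \sum_{i=1}^{n}\nu_i \;=\; \nu_1 - (n-1)\nu_1 \;=\; (2-n)\nu_1 .
\]
Since $G$ has at least one edge (it is connected with $n\geq 3$), $ES(G)$ is a nonzero non-negative matrix, hence $\nu_1>0$; together with $n\geq 3$ this contradicts $(2-n)\nu_1=0$.

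The main obstacle here is not the spectral-radius estimate itself, which is essentially one line once Theorem \ref{11} is in hand, but rather the justification of strictness: one has to invoke simplicity of $\nu_1$ (not merely $\nu_1\geq|\nu_i|$) to rule out the equality case, and then use the trace condition together with $n\geq 3$ and $\nu_1>0$ to close the argument. No further machinery beyond Perron--Frobenius and Theorem \ref{11} should be required.
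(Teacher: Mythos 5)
Your proposal takes essentially the same route as the paper: combine the identity $\sum_{i=1}^{n}\nu_i^{2}=2(FI+SZ)$ from Theorem \ref{11} with the bound $\sum_{i=1}^{n}\nu_i^{2}\leq n\nu_1^{2}$ and solve for $\nu_1$. You are in fact more careful than the paper, which simply asserts the strict inequality $\sum_{i=1}^{n}\nu_i^{2}<n\nu_1^{2}$ without justification, whereas you correctly rule out the equality case using simplicity of the Perron root together with the trace condition and $n\geq 3$.
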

\begin{proof}
By Theorem (\ref{11}), We have $\sum^{n}_{i=1}\nu_{i}^{2} = 2(FI+SZ)$.\\
Also $\sum^{n}_{i=1}\nu_{i}^{2}$ $< n \nu_1^{2}$, this implies that 
\begin{align*}
n \nu_1^{2} >\sum^{n}_{i=1}\nu_{i}^{2} = 2(FI+SZ)
\end{align*}
\begin{align*}
n \nu_1^{2} > 2(FI+SZ)
\end{align*}
\begin{align*}
\nu_1 > \sqrt{\frac{2(FI+SZ)}{n}}.
\end{align*}
\end{proof}
Now, we will obtain upper bound of largest $ES$ eigenvalue of a graph. 
\begin{thm} \label{21}
Let $G$ be a connected graph of order $n\geq 3$. Then
\begin{equation} 
\nu_1 \leq \sqrt{\frac{2(n-1)(FI+SZ)}{n}}
\end{equation}  
 equality holds (\ref{17}) if $G =K_n$.
\end{thm}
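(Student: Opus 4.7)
The plan is to combine the two trace identities that are already available with a single application of the Cauchy--Schwarz inequality to the $n-1$ smallest eigenvalues. Since the diagonal of $ES(G)$ is zero we have $\sum_{i=1}^n \nu_i = \operatorname{trace}(ES(G)) = 0$, which gives $\sum_{i=2}^n \nu_i = -\nu_1$. Theorem~\ref{11} further supplies $\sum_{i=1}^n \nu_i^2 = 2(FI+SZ)$, and therefore
\[
\sum_{i=2}^n \nu_i^2 \;=\; 2(FI+SZ) - \nu_1^2.
\]

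Next I would apply the Cauchy--Schwarz inequality to the vectors $(1,1,\dots,1)$ and $(\nu_2,\nu_3,\dots,\nu_n)$ in $\mathbb{R}^{n-1}$, obtaining
\[
\nu_1^2 \;=\; \Bigl(\sum_{i=2}^{n}\nu_i\Bigr)^2 \;\leq\; (n-1)\sum_{i=2}^{n}\nu_i^2 \;=\; (n-1)\bigl(2(FI+SZ) - \nu_1^2\bigr).
\]
Rearranging gives $n\,\nu_1^2 \leq 2(n-1)(FI+SZ)$, which upon taking square roots is exactly the stated bound. I don't anticipate any real obstacle here; the only mild subtlety is remembering that the lower bound in Theorem~\ref{20} used $\sum \nu_i^2 < n\nu_1^2$ (a strict inequality since $G$ is connected and hence not totally disconnected), while here the opposite direction is controlled by applying Cauchy--Schwarz only to the subfamily $\nu_2,\dots,\nu_n$, which is what lets the factor $n-1$ appear in place of $n$.

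For the equality claim with $G = K_n$, I would simply plug in. By Corollary~\ref{6} the largest $ES$ eigenvalue of $K_n$ is $\nu_1 = (n-1)^2\sqrt{3}$. A direct count gives $FI(K_n) = \binom{n}{2}\cdot 2(n-1)^2 = n(n-1)^3$ and $SZ(K_n) = \binom{n}{2}(n-1)^2 = \tfrac{1}{2}n(n-1)^3$, so $FI+SZ = \tfrac{3}{2}n(n-1)^3$. Substituting into the right-hand side yields $\sqrt{2(n-1)\cdot \tfrac{3}{2}n(n-1)^3/n} = \sqrt{3(n-1)^4} = (n-1)^2\sqrt{3} = \nu_1$, confirming equality. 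Structurally, equality in Cauchy--Schwarz requires $\nu_2 = \nu_3 = \cdots = \nu_n$, which is exactly what happens for $K_n$ by Corollary~\ref{6}.
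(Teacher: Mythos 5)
Your proposal is correct and follows essentially the same route as the paper: both use $\sum_i \nu_i = 0$ and $\sum_i \nu_i^2 = 2(FI+SZ)$ and then apply Cauchy--Schwarz to $\nu_2,\dots,\nu_n$ to obtain $\nu_1^2 \leq (n-1)\bigl(2(FI+SZ)-\nu_1^2\bigr)$. Your explicit verification of the equality case for $K_n$ is a welcome addition, since the paper merely asserts it.
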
 
\begin{proof}
Let $G$ be a connected graph  of order $n\geq 3$. Then by Theorem(\ref{23}),
we have,
\begin{align*}
\sum^{n}_{i=1}\nu_{i}^{2} = 2 (FI+SZ)
\end{align*}
Thus,
\begin{align*}
\nu_{1}^{2}+s\sum^{n}_{i=2}\nu_{i}^{2}= 2(FI+SZ)
\end{align*}
\begin{align*}
\ 2(FI+SZ)-\nu_{1}^{2}= \sum^{n}_{i=2}\nu_{i}^{2}
\end{align*}
Now by using Cauchy-Schwarz  inequality
\begin{align*}
\ 2(FI+SZ)-\nu_{1}^{2}= \sum^{n}_{i=2}\nu_{i}^{2}\geq \frac{1}{(n-1)}\left(\sum^{n}_{i=2}|\nu_{i}|\right)^{2} \geq \frac{\nu_{1}^2}{(n-1)}
\end{align*}
Then,
\begin{align*}
\ 2(FI+SZ)-\nu_{1}^{2}\geq \frac{\nu_{1}^2}{(n-1)}
\end{align*}
gives,
\begin{align*}
\ 2(FI+SZ)\geq\nu_{1}^2\left(1+\frac{1}{n-1}\right)
\end{align*} 
\begin{align*}
\ 2(FI+SZ)\geq \nu_{1}^2\left(\frac{n}{n-1}\right)
\end{align*}
\begin{align*}
 \nu_{1}^2\leq\left(\frac{2(n-1)(FI+SZ)}{n}\right)
\end{align*}
\begin{align*}
\nu_{1}\leq\sqrt{\frac{2(n-1)(FI+SZ)}{n}}
\end{align*}
The equality holds if $G=K_{n}$.
\end{proof}
\section {ES-Energy of Graphs}
\indent In this section, we obtain $ES$ energy of some classes of graph and bounds on the $ES$ energy of a graph.
\begin{thm}
Let $G$ be a $r$-regular connected graph of order $n\geq 3$. Then $E_{ES}(G) = {r}{\sqrt{3}} E(G)$.
\end{thm}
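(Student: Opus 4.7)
The plan is to invoke Theorem \ref{5} directly and then apply the definition of $ES$ energy; the statement is essentially a one-line consequence of the fact that for an $r$-regular graph the $ES$ matrix is a positive scalar multiple of the adjacency matrix.

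First, I would observe that since $G$ is $r$-regular, every edge $(v_i, v_j) \in E(G)$ has $d_i = d_j = r$, so each nonzero entry of $ES(G)$ equals $\sqrt{r^2 + r^2 + r^2} = r\sqrt{3}$. Hence $ES(G) = r\sqrt{3}\,A(G)$, and by Theorem \ref{5} the $ES$ eigenvalues satisfy $\nu_i = r\sqrt{3}\,\lambda_i$ for every $i = 1, 2, \dots, n$.

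Next, I would apply the definition $E_{ES}(G) = \sum_{i=1}^{n}|\nu_i|$. Substituting $\nu_i = r\sqrt{3}\,\lambda_i$ and using that $r\sqrt{3} \geq 0$ (so the factor can be pulled out of the absolute value), I get
\begin{equation*}
E_{ES}(G) \;=\; \sum_{i=1}^{n}|r\sqrt{3}\,\lambda_i| \;=\; r\sqrt{3}\sum_{i=1}^{n}|\lambda_i| \;=\; r\sqrt{3}\,E(G),
\end{equation*}
which is exactly the claimed identity.

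There is no real obstacle here: the only point worth checking is the nonnegativity of the scalar $r\sqrt{3}$, which justifies factoring it out of $|\cdot|$. The hypotheses that $G$ is connected and of order $n \geq 3$ are not strictly needed for the algebraic manipulation, but they ensure that $r \geq 1$ and that Theorem \ref{5} applies as stated.
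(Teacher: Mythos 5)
Your proof is correct and is exactly the intended argument: the paper states this theorem without proof, treating it as an immediate consequence of Theorem \ref{5} (where $ES(G)=r\sqrt{3}\,A(G)$ is established), and your step of pulling the nonnegative scalar $r\sqrt{3}$ out of the absolute values in $\sum_{i=1}^{n}|\nu_i|$ is precisely what fills that gap.
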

\begin{corollary}
If $G=K_n$ of order $n\geq 3$, then $E_{ES}(K_n) = {\sqrt3(n-1)} E(K_n)$. 
\end{corollary}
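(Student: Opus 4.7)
The plan is to treat this corollary as an immediate specialisation of the preceding theorem, since $K_n$ is the canonical example of a regular graph. First I would observe that $K_n$ is $(n-1)$-regular, so every vertex has degree $r = n-1$. Substituting $r = n-1$ into the identity $E_{ES}(G) = r\sqrt{3}\, E(G)$ from the preceding theorem yields the claimed formula directly.

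For completeness I would also indicate why the theorem itself applies here, namely that $ES(K_n) = (n-1)\sqrt{3}\, A(K_n)$, since in $K_n$ every edge $(v_i,v_j)$ contributes $\sqrt{d_i^2+d_j^2+d_id_j} = \sqrt{(n-1)^2+(n-1)^2+(n-1)^2} = (n-1)\sqrt{3}$ as its off-diagonal weight. Consequently each $ES$ eigenvalue $\nu_i$ equals $(n-1)\sqrt{3}\,\lambda_i$, so the absolute values scale by the same factor and summing gives $E_{ES}(K_n) = (n-1)\sqrt{3}\, E(K_n)$.

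As a sanity check I would verify both sides using the known spectrum: from Corollary \ref{6}, the $ES$ eigenvalues are $(n-1)^2\sqrt{3}$ (simple) and $-(n-1)\sqrt{3}$ with multiplicity $n-1$, giving
\begin{equation*}
E_{ES}(K_n) = (n-1)^2\sqrt{3} + (n-1)\cdot(n-1)\sqrt{3} = 2(n-1)^2\sqrt{3},
\end{equation*}
while from Lemma \ref{1} the adjacency energy satisfies $E(K_n) = (n-1) + (n-1) = 2(n-1)$, so the ratio $E_{ES}(K_n)/E(K_n)$ is indeed $(n-1)\sqrt{3}$.

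There is essentially no obstacle here; the only thing one must be careful about is not to confuse the statement with the first corollary's formulation (in which the largest eigenvalue is $(n-1)^2\sqrt{3}$, not $(n-1)\sqrt{3}$), but the energy computation automatically absorbs this factor correctly. The whole proof is therefore a one-line appeal to the previous theorem with $r = n-1$.
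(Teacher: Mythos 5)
Your proof is correct and matches the paper's (implicit) argument: the corollary is simply the preceding theorem $E_{ES}(G)=r\sqrt{3}\,E(G)$ specialised to $r=n-1$, which is exactly what you do, and your spectral sanity check confirms the factor. The paper itself offers no written proof, so your one-line specialisation is precisely the intended route.
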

\begin{corollary}
If $G=C_n$ of order $n\geq 3$, then $E_{ES}(C_n) = (2{\sqrt{3}}) E(C_n)$. 
\end{corollary}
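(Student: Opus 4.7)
The plan is to apply the preceding theorem with $r = 2$, since $C_n$ is a connected $2$-regular graph for every $n \geq 3$. That theorem asserts $E_{ES}(G) = r\sqrt{3}\, E(G)$ for any $r$-regular connected graph, and it rests on the pointwise identity $ES(G) = r\sqrt{3}\, A(G)$: when every vertex has degree $r$, each edge weight equals $\sqrt{r^2+r^2+r^2} = r\sqrt{3}$, so the $ES$ matrix is just a scalar multiple of the adjacency matrix and its eigenvalues inherit the same scaling. Substituting $r = 2$ therefore yields $E_{ES}(C_n) = 2\sqrt{3}\, E(C_n)$ immediately.

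As an internal consistency check I would also verify the claim directly from the explicit eigenvalues. Corollary \ref{7} gives the $ES$ eigenvalues of $C_n$ as $\nu_i = 4\sqrt{3}\cos(2\pi i/n)$, and Lemma \ref{3} gives the adjacency eigenvalues as $\lambda_i = 2\cos(2\pi i/n)$. Summing absolute values,
\begin{align*}
E_{ES}(C_n) \;=\; \sum_{i} |\nu_i| \;=\; 4\sqrt{3}\sum_{i}\left|\cos\tfrac{2\pi i}{n}\right| \;=\; 2\sqrt{3}\sum_{i}\left|2\cos\tfrac{2\pi i}{n}\right| \;=\; 2\sqrt{3}\, E(C_n),
\end{align*}
confirming the identity. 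There is no real obstacle here: the statement is a one-line corollary of the regular-graph scaling, and the only content to check is that $C_n$ is $2$-regular for $n \geq 3$, which is immediate.
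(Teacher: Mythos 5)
Your proposal is correct and matches the paper's intended argument: the corollary follows immediately from the preceding theorem on $r$-regular graphs with $r=2$, which is exactly how the paper derives it (the proof is left implicit there). Your additional eigenvalue check via $\nu_i = 4\sqrt{3}\cos(2\pi i/n) = 2\sqrt{3}\,\lambda_i$ is consistent and adds nothing beyond confirmation.
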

\begin{corollary}
If $G = K_{m,n}$ of order ($m+n$), then $E_{ES}(K_{m,n}) = (\sqrt{m^2+n^2+mn}) E(K_{m,n})$. 
\end{corollary}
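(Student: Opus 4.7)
The plan is to invoke Theorem \ref{8} directly. Since for $K_{m,n}$ every edge joins a vertex of degree $n$ to a vertex of degree $m$, every nonzero entry of $ES(K_{m,n})$ equals $\sqrt{m^2+n^2+mn}$, so $ES(K_{m,n}) = \sqrt{m^2+n^2+mn}\,A(K_{m,n})$. Consequently, as recorded in Theorem \ref{8}, the $ES$-eigenvalues are $\nu_i = \sqrt{m^2+n^2+mn}\,\lambda_i$ for $i=1,\dots,m+n$, where the $\lambda_i$ are the adjacency eigenvalues.

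Next, I would simply feed this into the definition of $ES$-energy. Writing $c = \sqrt{m^2+n^2+mn} \ge 0$, we have
\begin{equation*}
E_{ES}(K_{m,n}) = \sum_{i=1}^{m+n}|\nu_i| = \sum_{i=1}^{m+n}|c\,\lambda_i| = c\sum_{i=1}^{m+n}|\lambda_i| = \sqrt{m^2+n^2+mn}\,E(K_{m,n}),
\end{equation*}
where the nonnegativity of $c$ is what lets it be pulled out of the absolute value.

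There is no real obstacle here; the argument is one line once Theorem \ref{8} is on hand, and it follows exactly the same template used in the two preceding corollaries for $K_n$ and $C_n$. The only conceptual point worth flagging is that, even though $K_{m,n}$ is not regular unless $m=n$, the proportionality $ES(K_{m,n})=cA(K_{m,n})$ still holds because bipartiteness forces every edge to contribute the same weight, which is precisely why the $r$-regular template of Theorem \ref{5} extends to this case.
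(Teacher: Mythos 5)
Your proof is correct and is exactly the argument the paper intends (the paper states this corollary without proof, but it follows from Theorem \ref{8} precisely as you describe, by pulling the nonnegative constant $\sqrt{m^2+n^2+mn}$ through the absolute values in the energy sum). One small wording point: it is not bipartiteness but the \emph{complete} bipartite structure that forces every edge to join a degree-$n$ vertex to a degree-$m$ vertex and hence makes $ES(K_{m,n})$ a scalar multiple of $A(K_{m,n})$; a general bipartite graph need not have this property.
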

The following result gives the lower bound of $ES$ energy of a graph in the form of Forgotten and Second Zagreb index of graph. 
\begin{thm}
Let $G$ be a connected graph with order $n\geq 3$ and $m$ edges. Then $E_{ES}(G) \geq 2 \sqrt(FI+SZ)$. 
\end{thm}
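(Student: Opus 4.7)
The plan is to square the energy and exploit both identities from Theorem~\ref{11} simultaneously, namely $\sum_{i=1}^{n}\nu_i^2 = 2(FI+SZ)$ and $\sum_{1\leq i<j\leq n}\nu_i\nu_j = -(FI+SZ)$. Starting from the definition,
\begin{equation*}
\bigl(E_{ES}(G)\bigr)^2 = \Bigl(\sum_{i=1}^{n}|\nu_i|\Bigr)^2 = \sum_{i=1}^{n}\nu_i^2 + 2\sum_{1\leq i<j\leq n}|\nu_i|\,|\nu_j|,
\end{equation*}
so I would like to bound the cross-term $\sum_{i<j}|\nu_i|\,|\nu_j|$ from below.

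The key step is the triangle inequality in the form
\begin{equation*}
\sum_{1\leq i<j\leq n}|\nu_i|\,|\nu_j| = \sum_{1\leq i<j\leq n}|\nu_i\nu_j| \;\geq\; \Bigl|\sum_{1\leq i<j\leq n}\nu_i\nu_j\Bigr| = FI+SZ,
\end{equation*}
where the final equality uses the second identity of Theorem~\ref{11}. Substituting both identities into the squared-energy expression yields
\begin{equation*}
\bigl(E_{ES}(G)\bigr)^2 \;\geq\; 2(FI+SZ) + 2(FI+SZ) \;=\; 4(FI+SZ),
\end{equation*}
and taking square roots (legitimate since $FI,SZ\geq 0$ and $E_{ES}(G)\geq 0$) gives $E_{ES}(G)\geq 2\sqrt{FI+SZ}$, as required.

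The only subtle point is the triangle inequality step: one must remember that the two Theorem~\ref{11} identities are not independent consequences of $\mathrm{tr}(ES(G))=0$ but are being combined here to sign-lock the cross-term. Because $\sum_{i<j}\nu_i\nu_j$ is negative (by Theorem~\ref{11}), taking absolute values inside the sum strictly increases the quantity whenever the eigenvalues have mixed signs, which is what powers the bound. No casework on the structure of $G$ or appeal to Perron--Frobenius is needed; the whole argument is a two-line algebraic manipulation once Theorem~\ref{11} is in hand.
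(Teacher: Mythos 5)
Your proposal is correct and follows essentially the same route as the paper: square the energy, expand into $\sum_i\nu_i^2$ plus the cross-term, apply the triangle inequality $\sum_{i<j}|\nu_i\nu_j|\geq\bigl|\sum_{i<j}\nu_i\nu_j\bigr|$, and substitute the two identities of Theorem~\ref{11} to get $(E_{ES}(G))^2\geq 4(FI+SZ)$. The paper's own proof is exactly this computation, so no further comparison is needed.
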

\begin{proof}
By the definition $E_{ES}(G)$ and Theorem (\ref{23}), we have
\begin{align*}
(E_{ES}(G))^{2} = \sum^{n}_{i=1}|\nu_{i}|^{2} + 2\sum^{n}_{1\leq i < j \leq n}|\nu_{i} \nu_{j}| 
\end{align*}
\begin{align*}
\geq \sum^{n}_{i=1} \nu_{i}^{2} + 2|\sum^{n}_{1\leq i < j \leq n} \nu_{i} \nu_{j}| 
\end{align*}
\begin{align*}
= 2(FI+SZ) + 2|-({FI+SZ})| = 4 (FI+SZ)
\end{align*}
Implies, \\
\begin{align*}
E_{ES}(G) \geq 2 \sqrt{(FI+SZ)}
\end{align*}
\end{proof}
Now the following result gives the upper bound of $ES$ energy of a graph in the form of Forgotten and Second Zagreb index of graph.
\begin{thm}
Let $G$ be a connected graph with order $n\geq 3$ with $m$ edges then $E_{Es}(G) \leq \sqrt{2n(FI+SZ)}$. 
\end{thm}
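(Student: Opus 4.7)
The plan is to prove this upper bound by a direct application of the Cauchy--Schwarz inequality, using Theorem \ref{11} to convert a sum of squares of eigenvalues into $2(FI+SZ)$.

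First I would write $E_{ES}(G) = \sum_{i=1}^n |\nu_i|$ and view this as the inner product of the all-ones vector $\mathbf{1} = (1,1,\dots,1)$ with the vector $(|\nu_1|, |\nu_2|, \dots, |\nu_n|)$ in $\mathbb{R}^n$. Then Cauchy--Schwarz yields
\begin{equation*}
E_{ES}(G) = \sum_{i=1}^n 1 \cdot |\nu_i| \;\leq\; \sqrt{\sum_{i=1}^n 1^2}\;\sqrt{\sum_{i=1}^n |\nu_i|^2} \;=\; \sqrt{n}\;\sqrt{\sum_{i=1}^n \nu_i^2}.
\end{equation*}
Next I would invoke Theorem \ref{11}, which gives $\sum_{i=1}^n \nu_i^2 = 2(FI+SZ)$. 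Substituting this into the bound above produces
\begin{equation*}
E_{ES}(G) \leq \sqrt{n}\,\sqrt{2(FI+SZ)} = \sqrt{2n(FI+SZ)},
\end{equation*}
which is exactly the desired inequality.

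There is essentially no technical obstacle here: the proof is a one-line Cauchy--Schwarz estimate once Theorem \ref{11} is in hand. The only point worth noting, if the authors want a sharpness statement, is that equality in Cauchy--Schwarz would require $|\nu_1| = |\nu_2| = \dots = |\nu_n|$, which combined with $\sum \nu_i = 0$ cannot hold strictly for a connected graph of order $n \geq 3$ (since the Perron--Frobenius eigenvalue $\nu_1$ is simple and strictly larger in modulus than is compatible with equality), so the bound is in fact strict; but the statement as given only asks for the inequality, so this observation is optional.
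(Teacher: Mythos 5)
Your proof is correct, and it is the standard McClelland-type argument: apply Cauchy--Schwarz to $\sum_{i=1}^{n} 1\cdot|\nu_i|$ and then substitute $\sum_{i=1}^{n}\nu_i^2 = 2(FI+SZ)$ from Theorem \ref{11}. The paper takes a different route: it expands $(E_{ES}(G))^2 = \sum_{i=1}^{n}|\nu_i|^2 + 2\sum_{1\leq i<j\leq n}|\nu_i\nu_j|$ and then asserts $(E_{ES}(G))^2 \leq |4(FI+SZ)| \leq 2n(FI+SZ)$. That intermediate step is problematic: since $2\sum_{1\leq i<j\leq n}|\nu_i\nu_j| \geq 2\bigl|\sum_{1\leq i<j\leq n}\nu_i\nu_j\bigr| = 2(FI+SZ)$, the expansion only shows $(E_{ES}(G))^2 \geq 4(FI+SZ)$, which is the \emph{lower} bound proved in the preceding theorem, not an upper bound. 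So your argument is not merely an alternative presentation; it is the one that actually establishes the claimed inequality, and it is both shorter and self-contained. Your closing remark on strictness is also sound --- equality in Cauchy--Schwarz would force all $|\nu_i|$ to be equal, which is incompatible with $\sum_i\nu_i=0$ and the simplicity of the Perron eigenvalue of the irreducible nonnegative matrix $ES(G)$ --- though, as you note, the statement only requires the non-strict inequality.
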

\begin{proof}

By the definition $E_{Es}(G)$ and Theorem (\ref{23}), we have
\begin{align*}
(E_{Es}(G))^{2} = \sum^{n}_{i=1}|\nu_{i}|^{2} + 2\sum^{n}_{1\leq i < j \leq n}|\nu_{i} \nu_{j}| 
\end{align*}
\begin{align*}
(E_{ES}(G))^{2} \leq |4 (FI+SZ)| \leq 2n(FI+SZ)
\end{align*}
then,\\
\begin{align*}
E_{ES}(G) \leq \sqrt{2n(FI+SZ)}.
\end{align*}
\end{proof}
Now to obtain the upper bound for Euler Sombor energy of the path $P_n$ we use the following lemma.
\begin{lem}\label{}\cite{Bilal Ahmad}
Let $M_{1}$ and $M_{2}$ be the square matrices of order $n$. Then 
\begin{align*}
\sum^{n}_{i=1}\nu_{i}(M_{1}+M_{2})\leq \sum^{n}_{i=1} \nu_{i}(M_{1})+ \sum^{n}_{i=1} \nu_{i}(M_{2})
\end{align*}
equality holds if and only if for an orthogonal matrix $M$ ,such that $M.M_{1}$ and $M.M_{2}$ are positive  semi-definite.
\end{lem}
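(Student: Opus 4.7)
The plan is to recognize this as the triangle inequality for the Ky Fan / nuclear norm (the sum of singular values). For the lemma to be useful in bounding $E_{ES}(G)=\sum_i |\nu_i|$, the notation $\nu_i(\cdot)$ must be read as the singular values $\sigma_i(\cdot)$, which for a real symmetric matrix coincide with the absolute values of the eigenvalues; under this interpretation, what is being asserted is the classical Ky Fan $n$-norm triangle inequality together with its standard equality case.

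First I would establish the variational formula
\begin{equation*}
\sum_{i=1}^{n} \sigma_i(A) \;=\; \max_{U \in O(n)} \mathrm{tr}(U^{T} A),
\end{equation*}
by writing the SVD $A = P \Sigma Q^{T}$: the choice $U = P Q^{T}$ yields $\mathrm{tr}(U^{T} A) = \mathrm{tr}(\Sigma) = \sum_i \sigma_i(A)$, while the von Neumann trace inequality precludes any larger value over the compact orthogonal group. With this in hand, the inequality is a one-liner: choose $U^{*}\in O(n)$ attaining the maximum for $M_1+M_2$, and then by linearity of the trace and the variational bound applied separately to $M_1$ and $M_2$,
\begin{equation*}
\sum_i \sigma_i(M_1+M_2) \;=\; \mathrm{tr}(U^{*T} M_1) + \mathrm{tr}(U^{*T} M_2) \;\leq\; \sum_i \sigma_i(M_1) + \sum_i \sigma_i(M_2).
\end{equation*}

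For the equality case, observe that equality forces $\mathrm{tr}(U^{*T} M_k) = \sum_i \sigma_i(M_k)$ for each $k = 1, 2$, i.e.\ the \emph{same} orthogonal $U^{*}$ maximizes $\mathrm{tr}(V^{T}A)$ for both $A = M_1$ and $A = M_2$. A short computation using the polar decomposition $A = U|A|$ shows that this maximum is attained precisely when $V^{T} A$ is positive semidefinite (since then $V^{T}A = |A|$ and $\mathrm{tr}|A| = \sum_i\sigma_i(A)$). Consequently equality in the lemma holds if and only if a single orthogonal matrix $M$ makes both $M^{T} M_1$ and $M^{T} M_2$ PSD, which is exactly the stated condition (with $M$ playing the role of $U^{*}$).

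The main obstacle I expect is the equality analysis in the rank-deficient setting: when $M_1$ or $M_2$ is singular the orthogonal factor in the polar decomposition is not unique, so one must argue carefully that the freedom in choosing polar factors for $M_1$ and $M_2$ can be reconciled by a single $M$ that simultaneously works. The forward inequality, by contrast, is immediate from the variational formula and linearity of the trace.
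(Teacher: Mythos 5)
The paper does not prove this lemma at all: it is imported verbatim (with slightly garbled wording) from the cited reference of Rather and Imran, so there is no in-paper argument to compare against. Your reading of the statement is the right one --- taken literally with $\nu_i$ meaning eigenvalues the inequality would be a trivial equality by linearity of the trace, so the intended content is the Ky Fan trace/nuclear-norm inequality for singular values --- and your proof via the variational formula $\sum_i\sigma_i(A)=\max_{U\in O(n)}\mathrm{tr}(U^{T}A)$ is the standard and correct route: the SVD gives attainment, von Neumann's trace inequality gives the upper bound, and subadditivity of a maximum of linear functionals gives the triangle inequality.

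Two remarks on the equality case. First, the obstacle you flag (reconciling non-unique polar factors of $M_1$ and $M_2$ in the rank-deficient case) is not actually an issue in the direction where it would matter: equality forces the \emph{single} maximizer $U^{*}$ for $M_1+M_2$ to satisfy $\mathrm{tr}(U^{*T}M_k)=\sum_i\sigma_i(M_k)$ for both $k$, so the same orthogonal matrix is handed to you for free; no reconciliation of separately chosen polar factors is needed. The converse direction is immediate, since if $MM_1$ and $MM_2$ are both PSD then so is their sum, and the trace of a PSD matrix equals the sum of its singular values. Second, the step that does deserve more care than ``a short computation with the polar decomposition'' is the claim that $\mathrm{tr}(V^{T}A)=\sum_i\sigma_i(A)$ forces $V^{T}A$ to be PSD. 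The clean argument is: for $B=V^{T}A$ one has $\mathrm{tr}(B)=\sum_i\lambda_i(B)\leq\sum_i|\lambda_i(B)|\leq\sum_i\sigma_i(B)=\sum_i\sigma_i(A)$ by Weyl's majorant inequality; equality in the second inequality for the full sum forces $B$ normal, and equality in the first then forces all eigenvalues real and nonnegative, hence $B$ symmetric PSD. With that step filled in, your proof is complete and supplies exactly what the paper leaves to the citation.
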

\begin{thm}
Let $P_{n}$ be a path with order $n\geq 4$ ,then it's Euler Sombor energy is given by 
\begin{align*}
\ E_{ES}(P_{n}) \leq \sqrt{12}{E(P_{n})+2(\sqrt{7}-\sqrt{12})}.
\end{align*} 
\end{thm}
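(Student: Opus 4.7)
The plan is to apply the triangle inequality for graph energy (the preceding lemma) to a decomposition of $ES(P_n)$ adapted to the degree structure of a path. In $P_n$ the two pendant edges have ES-weight $\sqrt{1^2+2^2+1\cdot 2}=\sqrt{7}$, while each of the $n-3$ internal edges has ES-weight $\sqrt{2^2+2^2+2\cdot 2}=\sqrt{12}$. This suggests the decomposition
\[
ES(P_n) \;=\; \sqrt{7}\,A(P_n) \;+\; (\sqrt{12}-\sqrt{7})\,A'(P_n),
\]
where $A'(P_n)$ is the adjacency matrix of the subgraph of $P_n$ obtained by deleting its two pendant edges. As an $n\times n$ matrix, $A'(P_n)$ vanishes outside the principal submatrix indexed by the internal vertices $\{2,\dots,n-1\}$, where it coincides with $A(P_{n-2})$; hence $E(A'(P_n))=E(P_{n-2})$.

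Applying the lemma together with the homogeneity of energy under positive scalar multiplication gives the intermediate bound
\[
E_{ES}(P_n) \;\leq\; \sqrt{7}\,E(P_n) \;+\; (\sqrt{12}-\sqrt{7})\,E(P_{n-2}).
\]
I would then rewrite the right-hand side as $\sqrt{12}\,E(P_n)-(\sqrt{12}-\sqrt{7})\bigl[E(P_n)-E(P_{n-2})\bigr]$ and invoke the auxiliary inequality $E(P_n)-E(P_{n-2})\geq 2$, valid for all $n\geq 4$. Since $\sqrt{12}-\sqrt{7}>0$, this yields
\[
E_{ES}(P_n) \;\leq\; \sqrt{12}\,E(P_n) - 2(\sqrt{12}-\sqrt{7}) \;=\; \sqrt{12}\,E(P_n) + 2(\sqrt{7}-\sqrt{12}),
\]
which is the claimed bound. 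Note that, unlike the naive decomposition $ES(P_n)=\sqrt{12}\,A(P_n)+(\sqrt{7}-\sqrt{12})B$ (with $B$ the two-pendant-edge submatrix), which yields a loose positive correction, the present decomposition pushes the perturbation onto the \emph{larger} weight and thereby produces the desired negative correction.

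The main obstacle is justifying the auxiliary inequality $E(P_n)-E(P_{n-2})\geq 2$ for every $n\geq 4$. Using the closed forms $E(P_n)=2\csc(\pi/(2n+2))-2$ for even $n$ and $E(P_n)=2\cot(\pi/(2n+2))-2$ for odd $n$, this reduces in the even case, with $u=\pi/(2n+2)$ and $v=\pi/(2n-2)$, to $\csc u-\csc v\geq 1$, equivalently $\sin v\,(1-\sin u)\geq \sin u$; the odd case reduces analogously via the identity $\cot u-\cot v=\sin(v-u)/(\sin u\sin v)$. Both reductions are verified directly at the boundary cases ($n=4$ gives $E(P_4)-E(P_2)=2\sqrt{5}-2\approx 2.47$; $n=5$ gives $E(P_5)-E(P_3)=2+2\sqrt{3}-2\sqrt{2}\approx 2.64$), and for larger $n$ a short monotonicity/asymptotic argument shows the gap approaches $8/\pi\approx 2.55$ from above, so the inequality persists throughout. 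Once this trigonometric step is in place, the theorem follows.
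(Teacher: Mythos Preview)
Your approach is genuinely different from the paper's, and in fact stronger. The paper uses precisely the decomposition you call ``naive'': it writes $ES(P_n)=\sqrt{12}\,A(P_n)+B$, where $B$ is supported on the two pendant edges with entries $\sqrt{7}-\sqrt{12}$, records that the nonzero eigenvalues of $B$ are $\pm(\sqrt{7}-\sqrt{12})$ each with multiplicity two, and then invokes the Ky~Fan--type energy inequality. But that route only delivers
\[
E_{ES}(P_n)\ \le\ \sqrt{12}\,E(P_n)+E(B)\ =\ \sqrt{12}\,E(P_n)+4(\sqrt{12}-\sqrt{7}),
\]
a \emph{positive} correction; the printed negative correction $2(\sqrt{7}-\sqrt{12})$ does not actually follow from the paper's own argument. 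Your alternative decomposition $ES(P_n)=\sqrt{7}\,A(P_n)+(\sqrt{12}-\sqrt{7})A'(P_n)$, combined with the auxiliary inequality $E(P_n)-E(P_{n-2})\ge 2$, does establish the bound as stated, so your detour buys exactly what is needed. One small caveat on the auxiliary step: for even $n$ the gap $E(P_n)-E(P_{n-2})$ \emph{increases} toward $8/\pi$ (its minimum $2\sqrt{5}-2\approx 2.47$ occurs at $n=4$), while for odd $n$ it decreases toward $8/\pi$ from above; so ``approaches from above'' is only half right, but the inequality itself survives in both parities since the even-$n$ minimum already exceeds~$2$.
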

\begin{proof}
The Euler Sombor matrix of the path $P_{n}$ can be written as,\\

$\ {ES}(P_{n}) = (\sqrt{12})
 \begin{bmatrix}
0 & 1 & 0 &\dots & 0 & 0 & 0 \\
1 & 0 & 1 &\dots & 0 & 0 & 0 \\
0 & 1 & 0 &\dots & 0 & 0 & 0 \\
\vdots &\vdots &\vdots &\ddots &\vdots &\vdots &\vdots\\
0 & 0 & 0 &\dots & 0 & 1 & 0 \\
0 & 0 & 0 &\dots & 1 & 0 & 1 \\
0 & 0 & 0 &\dots & 0 & 1 & 0 \\
\end{bmatrix}
+ 
\alpha
\begin{bmatrix}
0 & 1 & 0 &\dots & 0 & 0 & 0 \\
1 & 0 & 0 &\dots & 0 & 0 & 0 \\
0 & 0 & 0 &\dots & 0 & 0 & 0 \\
\vdots &\vdots &\vdots &\ddots &\vdots &\vdots &\vdots\\
0 & 0 & 0 &\dots & 0 & 0 & 0 \\
0 & 0 & 0 &\dots & 0 & 0 & 1 \\
0 & 0 & 0 &\dots & 0 & 1 & 0 \\
\end{bmatrix} \\${\tiny}
where,$\alpha = (\sqrt{7}-\sqrt{12})$ \\
Therefore $\ {ES}(P_{n})=\sqrt{12}\ A(P_{n})+ B $ \\
The Euler Sombor eigen values of $B$ are,
$\pm\{\sqrt{7} -\sqrt{12}\}$ repeated twice \\
Then by lemma $4.7$,Euler Sombor Energy of path $P_{n}$ is,
\begin{align*}
\ E_{ES}(P_{n}) \leq \sqrt{12}{E(P_{n})+2(\sqrt{7}-\sqrt{12})}.
\end{align*}
\end{proof}   
 {\bf Here we have Euler Sombor ,Forgotten and Second Zagreb energies of Octane isomers} \\
\begin{table}[h]
\centering
\caption{Euler Sombor ,Forgotten and Second Zagreb energies of octane isomers}
\begin{tabular}{|p{1cm}|p{5cm}|p{2cm}|p{2cm}|p{2cm}|}
\hline
Sr.No. & Molucule & $E_{ES}$ & $E_{FI}$ & $E_{SZ}$ \\
\hline
1 & octane & 30.2172 &66.2439& 31.6212 \\
\hline
2 & 2-methyl-heptane & 30.298 &75.489 &32.0568\\
\hline
3 & 3-methyl-heptane & 31.61&77.6835 &33.8232 \\
\hline
4 & 4-methyl-heptane & 30.194 &75.0974 &32.9665 \\
\hline
5 & 3-ethyl-hexane & 31.415 &76.9651 &34.5159 \\
\hline
6 & 2,2-dimethyl-hexane & 32.394 &99.8456 &33.9879 \\
\hline
7 & 2,3-dimethyl-hexane & 31.654 &86.9688 &35.5943 \\
\hline
8 & 2,4-dimethyl-hexane & 31.452 &86.2309 &34.114 \\
\hline
9 & 2,5-dimethyl-hexane & 31.642 &87.171 &33.3169 \\
\hline
10 & 3,3-dimethyl-hexane & 32.464 &100.2934 &36.4399 \\
\hline
11 & 3,4-dimethyl-hexane & 32.98 &89.1456 &37.1639 \\
\hline
12 & 2-methyl-3-ethyl-pentane & 31.502 &86.293 &36.0791 \\
\hline
13 & 3-methyl-3-ethyl-pentane & 33.928 &102.0731 &38.1993 \\
\hline
14 & 2,2,3-trimethyl-pentane & 33.804 &111.5061 &38.7971 \\
\hline
15 & 2,2,4-trimethyl-pentane & 31.964 &107.718 &33.9633 \\
\hline
16 & 2,3,3-trimethyl-pentane & 33.976 &111.8089 &39.7411 \\
\hline
17 & 2,3,4-trimethyl-pentane & 32.98 &98.451 &37.7739 \\
\hline
18 & 2,2,3,3-tetramethyl-butane & 34.632 &134.0448 &42.332 \\
\hline
\end{tabular}
\end{table}
\section{Correlation}
In this section we have correlation coefficients \cite{Z.Lin et.al.} of ES energy to FI and SZ energy of octane isomers. Also correlation coefficients of ES topological index and some physicochemical properties of octane isomers.
\begin{table}[h]
\centering
\caption{Correlation Coefficient of ES energy  to FI and SZ energy of octane isomers}
\begin{tabular}{|p{1cm}|p{3cm}|p{6cm}|}
\hline
Sr.No. & Energy & Correlation Coefficient \\
\hline
1& FI & 0.8900\\
\hline
2& SZ &0.9310\\
\hline
\end{tabular}
\end{table} \\
Now we are giving correlation coefficients of ES energy to some following physicochemical  properties of octain isomers.\\
viz.Boiling point(BP),Entropy, Acentric factor (AF),Enthalpy of vaporization(HVAP) and  standard enthalpy of vaporization(DHVAP)\\
\begin{table}[h]
\centering
\caption{Some Physicochemical properties of octane isomers}
\begin{tabular}{|p{1cm}|p{5cm}|p{1.5cm}|p{1.5cm}|p{1.5cm}|p{1.5cm}|p{1.5cm}|}
\hline
Sr.No. & Molecule & BP & Entropy & AF & HVAP & DHVAP \\
\hline
1 & octane & 125.7  &  111.7 & 0.3979 & 73.19 & 9.915 \\
\hline
2 & 2-methyl-heptane & 117.6 & 109.8 & 0.3792 & 70.3 & 9.484 \\
\hline
3 & 3-methyl-heptane & 118.9 & 111.3 & 0.371 & 71.3 & 9.521 \\
\hline
4 & 4-methyl-heptane & 117.7 & 109.3 & 0.3715 & 70.91 & 9.483  \\
\hline
5 & 3-ethyl-hexane & 118.5 & 109.4 & 0.3625 & 71.7 & 9.476  \\
\hline
6 & 2,2-dimethyl-hexane & 106.8&  103.4 & 0.3394 & 67.7 & 8.915   \\
\hline
7 & 2,3-dimethyl-hexane & 115.6 & 108 & 0.3483 & 70.2 & 9.272  \\
\hline
8 & 2,4-dimethyl-hexane & 109.4 & 107 & 0.3442 & 68.5 & 9.029   \\
\hline
9 & 2,5-dimethyl-hexane & 109.1 & 105.7 & 0.3568 & 68.6 & 9.051 \\
\hline
10 & 3,3-dimethyl-hexane & 112 & 104.7 & 0.3226 & 68.5 & 8.973 \\
\hline
11 & 3,4-dimethyl-hexane & 117.7 & 106.6 & 0.3404 & 70.2 & 9.316 \\
\hline
12 & 2-methyl-3-ethyl-pentane & 115.6 & 106.1 & 0.3324 & 69.7 & 9.209  \\
\hline
13 & 3-methyl-3-ethyl-pentane & 118.3 & 101.5 &0.3069 & 69.3 & 9.081 \\
\hline
14 & 2,2,3-trimethyl-pentane & 109.8 & 101.3 & 0.3001 & 67.3 & 8.826 \\
\hline
15 & 2,2,4-trimethyl-pentane & 99.24 & 104.1 & 0.3054 & 64.87 & 8.402 \\
\hline
16 & 2,3,3-trimethyl-pentane &114.8 & 102.1 & 0.2932 & 68.1 &8.897  \\
\hline
17 & 2,3,4-trimethyl-pentane & 113.5 & 102.4 & 0.3174 & 68.37 & 9.014  \\
\hline
18 & 2,2,3,3-tetramethyl-butane & 106.5 & 93.06 & 0.2553 & 66.2 & 8.41  \\
\hline
\end{tabular}
\end{table} \\ \\ \\ 
\begin{table}[h]
\centering
\caption{Correlation coefficients of BP,Entropy,AF ,HVAP and DHVAP of octane isomers with ES index}
\begin{tabular}{|p{1cm}|p{2cm}|p{2cm}|p{2cm}|p{2cm}|p{2cm}|p{2cm}|}
\hline
Sr.No. & Index & BP & Entropy & AF & HVAP & DHVAP  \\
\hline
1& ES & -0.374 & -0.8740 & -0.9003 & -0.6129 & -0.6966 \\
\hline
2& FI & -0.672 & -0.9611 & -0.9743 & -0.532 & -0.9105 \\
\hline
3& SZ & -0.248 & -0.8557 & -0.8967 & -0.5053 & -0.6139 \\
\hline
\end{tabular}
\end{table}

\newpage
\bibliographystyle{amsplain}

\end{document}